\documentclass[12pt]{article}
\usepackage[top=1in, bottom=1in, inner=1in, outer=1in, footskip=0.5in]{geometry}
\usepackage{amsmath}
\usepackage{amsthm}
\usepackage{amssymb}
\usepackage{enumerate}
\usepackage{color}
\usepackage{tikz}
\usepackage{multirow}
\usepackage{cancel}
\allowdisplaybreaks

\pagestyle{myheadings}
\markboth{A. Swift}{A coding of bundle graphs}

\newtheorem{theorem}{Theorem}[section]
\newtheorem{lemma}[theorem]{Lemma}
\newtheorem{proposition}[theorem]{Proposition}
\newtheorem{corollary}[theorem]{Corollary}

\theoremstyle{definition}
\newtheorem{definition}[theorem]{Definition}

\newtheorem{remark}[theorem]{Remark}

\begin{document}

\baselineskip=17pt

\title{A coding of bundle graphs and their embeddings into Banach spaces}
\author{Andrew Swift \\
E-mail:  ats0@math.tamu.edu  }
\date{}
\maketitle

\renewcommand{\thefootnote}{}

\footnote{2010 \emph{Mathematics Subject Classification}: Primary 46B85; Secondary 46B06, 46B07, 05C78.}

\begin{abstract}
The purpose of this article is to generalize some known characterizations of Banach space properties in terms of graph preclusion.  In particular, it is shown that superreflexivity can be characterized by the non-equi-bi-Lipschitz embeddability of any family of bundle graphs generated by a nontrivial finitely-branching bundle graph.  It is likewise shown that asymptotic uniform convexifiability can be characterized within the class of reflexive Banach spaces with an unconditional asymptotic structure by the non-equi-bi-Lipschitz embeddability of any family of bundle graphs generated by a nontrivial $\aleph_0$-branching bundle graph.  The best known distortions are recovered.  For the specific case of $L_1$, it is shown that every countably-branching bundle graph bi-Lipschitzly embeds into $L_1$ with distortion no worse than $2$.
\end{abstract}

\section{Introduction}

Recall that given two metric spaces $(X,d_X)$ and $(Y,d_Y)$, $X$ is said to be bi-Lipschitzly embeddable into $Y$ if there is a function $f\colon X\to Y$ and constants $C_1,C_2>0$ such that 
\begin{equation}
\label{eq:lipschitz}
C_1d_X(x_1,x_2)\leq d_Y(f(x_1),f(x_2))\leq C_2d_X(x_1,x_2)
\end{equation}
for all $x_1,x_2\in X$, and in this case $f$ is called a bi-Lipschitz embedding.  The distortion $\mathrm{dist}(f)$ of a bi-Lipschitz embedding $f$ is the infimum of $C_2/C_1$ over all constants $C_1,C_2>0$ satisfying \eqref{eq:lipschitz}.  We'll let $c_Y(X)$ be the infimum of $\mathrm{dist}(f)$ over all bi-Lipschitz embeddings $f\colon X\to Y$.  A family of metric spaces $\{X_i\}_{i\in \mathcal{I}}$ is said to be equi-bi-Lipschitzly embeddable into $Y$ if $\sup_{i\in \mathcal{I}} c_Y(X_i)<\infty$.

In \cite{bourgain}, J. Bourgain proved that the notion of superreflexivity in Banach spaces can be characterized by the non-equi-bi-Lipschitz embeddability of the family of binary trees with finite height.  Since then, the non-equi-bi-Lipschitz embeddability of several other families of graphs have also been shown to characterize superreflexivity (\cite{baudier}, \cite{johnson_schechtman}, \cite{ostrovskii_randrianantoanina}).  In \cite{baudier_etal}, F. Baudier et al. proved that the non-equi-bi-Lipschitz embeddability of the family of $\aleph_0$-branching diamond graphs characterizes the asymptotic uniform convexifiability of refexive Banach spaces with an unconditional asymptotic structure.  They also show that this same family of graphs is equi-bi-Lipschitzly embeddable into $L_1$.

The families of graphs used in \cite{johnson_schechtman}, \cite{ostrovskii_randrianantoanina}, and \cite{baudier_etal} are all contained in a larger class of graphs, called the ``bundle graphs''.  The goal of this paper is to generalize the results mentioned above to this larger class while providing simpler proofs.  We order the sections roughly in terms of ease of proof.  In Section \ref{sec:notation}, we'll define what a bundle graph is and provide a natural labelling of the vertices of such graphs.  We'll then derive a formula for the graph metric in terms of this labelling.  In Sections \ref{sec:auc} and \ref{sec:L1}, we'll generalize two results in \cite{baudier_etal}.  In Section \ref{sec:auc} we'll show that every countably-branching bundle graph is bi-Lipschitzly embeddable into any Banach space with a good $\ell_\infty$-tree with distortion bounded above by a constant depending only on the good $\ell_\infty$-tree, which implies a more general characterization of asymptotic uniform convexifiability for the class of reflexive Banach spaces with an unconditional asymptotic structure. In Section \ref{sec:L1} we'll show that every countably-branching bundle graph is bi-Lipschitzly embeddable into $L_1$ with distortion bounded above by $2$.  In Section \ref{sec:superreflexivity} we'll show that every finitely branching bundle graph is bi-Lipschitzly embeddable into any Banach space containing an equal-signs-additive basis with distortion bounded above by a constant not depending on the branching number (although it will still depend on the bundle graph).  However, in Section \ref{sec:oslash}, we'll show that this constant does not increase with $\oslash$-products, and thus generalize the characterizations of superreflexivity found in \cite{johnson_schechtman} and \cite{ostrovskii_randrianantoanina}.  

The problem of characterizing superreflexivity in purely metric terms belongs to a more general investigation of metric characterizations of local properties of Banach spaces, called the Ribe program.  A survey of other results in this program can be found in \cite{naor}.

\section{Notation and definitions}
\label{sec:notation}
We will denote $\mathbb{N}\cup \{0\}$ by $\mathbb{N}_0$ and given $n\in \mathbb{N}_0$, we will denote the set $\{i\in \mathbb{N}_0\ |\ i\leq n\}$ by $[n]$.
Given a finite sequence $A=(a_i)_{i=1}^n$, the length of $A$, denoted by $|A|$, is defined to be $n$ and the maximum of $A$, denoted by $\max A$ is defined to be $\max\{a_i\}_{i=1}^n$.  If $m\in \mathbb{N}_0$, then we'll define $A\restriction_m$ by 
$A\restriction_m=(a_i)_{i=1}^m$ if $m\leq n$ and $A\restriction_m=A$ if $m>n$.
We'll write $B\preceq A$ if $B=A\restriction_m$ for some $m\in \mathbb{N}_0$.  Given another finite sequence $B$, we'll denote by $A\wedge B$ the longest sequence $C$ such that $C\preceq A$ and $C\preceq B$, and by $A^\frown B$ the concatenation of $A$ and $B$.  Note that if $A_1\preceq A_2$ and $A_1\npreceq B$, then $A_2\wedge B=A_1\wedge B$.  We'll denote the sequence of length $0$ (the empty sequence) by $\emptyset$.  Given a set $X$ and $n\in \mathbb{N}_0$, we'll denote by $X^n$ the set of sequences in $X$ with length equal to $n$ and by $X^{\leq n}$ the set of all sequences in $X$ with length at most $n$.

Given a graph $G$, we'll always use the shortest-path metric when discussing the distance between two vertices in $G$.  We'll denote the vertex set of $G$ by $V(G)$ and the edge set of $G$ by $E(G)$.
A graph with two distinguished vertices, one designated the ``top'', and the other the ``bottom'', will be called a top-bottom graph.  The height of a top-bottom graph is defined to be the distance between its top and bottom.

\begin{definition}
\label{bundle_graph}
Given a cardinality $\kappa$, a top-bottom graph is called a \emph{$\kappa$-branching bundle graph} if it can be formed by any (finite) sequence of the following operations:
\begin{itemize}
\item (Initialization) Create a path of length 1, with one endpoint designated the top and the other the bottom.
\item (Parallel Composition) Given two $\kappa$-branching bundle graphs $G_1$ and $G_2$, create a new graph $G$ by identifying the top of $G_1$ with the bottom of $G_2$.  The bottom of $G$ will be the bottom of $G_1$ and the top of $G$ will be the top of $G_2$.
\item (Series Composition) Given a $\kappa$-branching bundle graph $G'$, create a new graph $G$ by taking $\kappa$ copies of $G'$ and then identifying all the bottoms with each other and all the tops with each other.  The bottom of $G$ will be the bottom of $G'$ and the top of $G$ will be the top of $G'$. 
\end{itemize} 
\end{definition}

\begin{definition}
\label{height_depth}
Given a vertex $v$ in a bundle graph $G$, the \emph{height} of $v$ is the distance from $v$ to the bottom of $G$.  The \emph{depth} or \emph{level} of $v$ is defined recursively as follows:
\begin{itemize}
\item If $v$ is the top or bottom of $G$, then $v$ has depth 0.
\item If $v$ is neither the top nor bottom of $G$, and $G$ was constructed via parallel composition between two $\kappa$-branching bundle graphs $G_1$ and $G_2$, then the depth of $v$ in $G$ is the same as its depth in $G_1$ if $v\in V(G_1)$ or its depth in $G_2$ if $v\in V(G_2)$.
\item If $v$ is neither the top nor bottom of $G$, and $G$ was constructed via series composition of a $\kappa$-branching bundle graph $G'$, then the depth of $v$ in $G$ is one more than the depth of $v'$ in $G'$ if $v$ is a copy of $v'\in V(G')$. 
\end{itemize}
\end{definition}

It isn't difficult to see that given two $\kappa$-branching bundle graphs, $G$ and $G'$, a new $\kappa$-branching bundle graph can be created by replacing every edge of $G$ with a copy of $G'$ (where the bottom of $G'$ is placed on the lower endpoint of the edge and the top on the higher).  We'll give a proof of this fact in Section \ref{sec:oslash}.  Thus the diamond and Laakso graphs used in \cite{johnson_schechtman}, \cite{ostrovskii_randrianantoanina}, and \cite{baudier_etal} are all examples of bundle graphs.

Suppose $G$ is a bundle graph with height $M+1$ for some $M\in \mathbb{N}_0$.  From the definitions, every vertex of $G$ at a given height will have the same depth.  And if we know the depth associated with each height, we can use Definition \ref{height_depth} to go backwards to find a sequence of operations from Definition \ref{bundle_graph} that can be used to create $G$.  Thus, when deriving properties of $G$, we don't actually need to know the sequence of operations used to create $G$.  All information about $G$ is contained in the sequence $W=(w_r)_{r=0}^{M+1}$, where $w_r$ is the depth associated to height $r$ (we include $w_0=w_{M+1}=0$ for convenience).

Suppose $r\in [M+1]$ is such that $w_r>0$, and let $v$ be a vertex of $G$ with height $r$.  Since $w_r>0$, $v$ is a copy of some vertex $v'$ in some $\kappa$-branching bundle graph $G'$ (which was used in series composition in one of the steps to create $G$).  To distinguish $v$ from other copies of $v'$, we'll label the copies of $G'$ with elements of $\kappa$ and record the copy in which $v$ was found as $a_{w_r}$.  Now in the graph $G'$, $v'$ has depth $w_r-1$.  If $w_r-1>0$, then we repeat this process for $v'$ and obtain $a_{w_r-1}$.  We continue to repeat this process until we obtain a sequence $A=(a_i)_{i=1}^{w_r}$ which can be used to distinguish $v$ from any other vertex at height $r$.  Doing this for every vertex in $G$ yields a labelling of the vertex set.

With this idea in mind, we are now in the position to give a non-recursive definition of bundle graph equivalent to Definition \ref{bundle_graph}.  Note that two adjacent vertices $u$ and $v$ of a bundle graph must differ in height by exactly 1.  Furthermore, if $u$ and $v$ are adjacent and were created during series composition of a bundle graph $G'$, then $u$ and $v$ must have come from the same copy of $G'$.

\begin{definition}
\label{def:alt_bundle_graph}
Given a finite sequence $W=(w_r)_{r=0}^{M+1}\subseteq \mathbb{N}_0$ such that $w_0=w_{M+1}=0$ and a cardinality $\kappa$, the \emph{$\kappa$-branching bundle graph associated with $W$} is $T_{W,\kappa}=(V,E)$, defined by
\begin{align*}
V&=\left\{(r,A)\ |\ r\in [M+1] \mbox{ and } A\in \kappa^{w_r}\right\}, \\
E&=\{\{(r,A),(s,B)\}\subseteq V\ |\ |r-s|=1 \mbox{ and } A\preceq B\}.
\end{align*}
The vertices $(0,\emptyset)$ and $(M+1,\emptyset)$ in $V$ are called the \emph{bottom} and \emph{top}, respectively, of $T_{W,\kappa}$.
\end{definition}

We illustrate in Figure \ref{example} below a typical bundle graph with its vertex labelling.

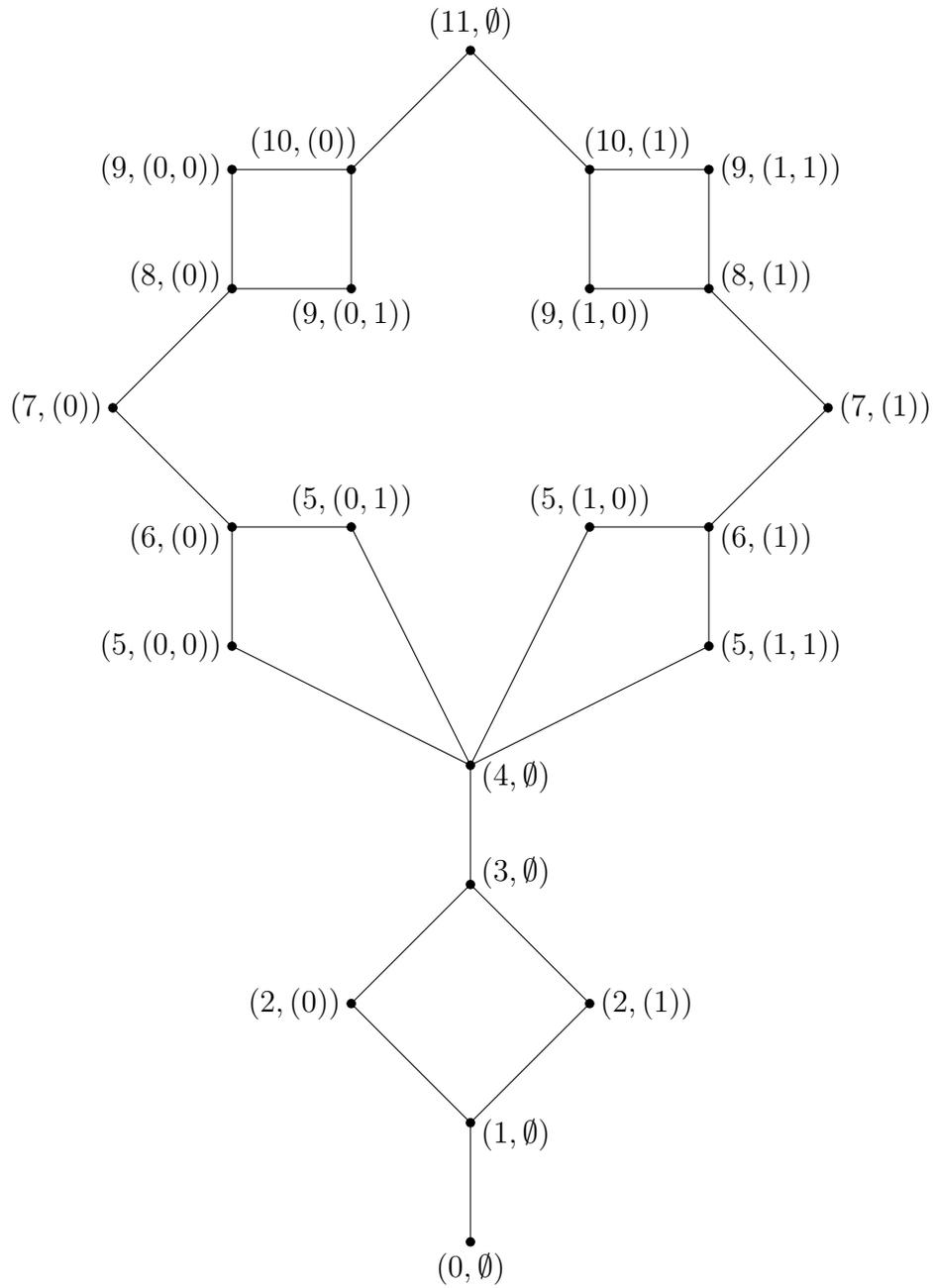
\begin{figure}

\centering
\begin{tikzpicture}[scale=0.8]
\filldraw 
(0,0) circle (2 pt) node[below] {$(0,\emptyset)$}

(0,2) circle (2 pt) 
(0,1.8) node[right]{$(1,\emptyset)$}
(-2,4) circle (2 pt) node[left] {$(2,(0))$}
(2,4) circle (2 pt) node[right] {$(2,(1))$}
(0,6) circle (2 pt) 
(0,6.2) node[right] {$(3,\emptyset)$}

(0,8) circle (2 pt) 
(0,7.8) node[right] {$(4,\emptyset)$}

(-2,12) circle (2 pt) node[above] {$(5,(0,1))$}
(-4,10) circle (2 pt) node[left] {$(5,(0,0))$} 
(-4,12) circle (2 pt) 
(-4,11.8) node[left] {$(6,(0))$}

(-6,14) circle (2 pt) node[left] {$(7,(0))$}

(2,12) circle (2 pt) node[above] {$(5,(1,0))$}
(4,10) circle (2 pt) node[right] {$(5,(1,1))$}
(4,12) circle (2 pt) 
(4,11.8) node[right] {$(6,(1))$}

(6,14) circle (2 pt) node[right] {$(7,(1))$}

(-4,16) circle (2 pt) 
(-4,16.2) node[left] {$(8,(0))$}
(-4,18) circle (2 pt) node[left] {$(9,(0,0))$}
(-2,16) circle (2 pt) node[below] {$(9,(0,1))$}
(-2,18) circle (2 pt) 
(-2.8,18) node[above] {$(10,(0))$}

(4,16) circle (2 pt) 
(4,16.2) node[right] {$(8,(1))$}
(4,18) circle (2 pt) node[right] {$(9,(1,1))$}
(2,16) circle (2 pt) node[below] {$(9,(1,0))$}
(2,18) circle (2 pt) 
(2.8,18) node[above] {$(10,(1))$}

(0,20) circle (2 pt) node[above] {$(11,\emptyset)$}

;

\draw
(0,0)

--(0,2) 
--(-2,4) 
--(0,6)
--(2,4) 
--(0,2)

(0,6)
--(0,8) 

--(-2,12) 
--(-4,12)
--(-4,10) 
--(0,8) 

(-4,12)
--(-6,14)

(0,8)
--(2,12) 
--(4,12)
--(4,10) 
--(0,8)

(4,12)
--(6,14)

(-6,14) 

--(-4,16) 
--(-4,18) 
--(-2,18)
--(-2,16) 
--(-4,16)

(-2,18)
--(0,20)

(6,14)
--(4,16) 
--(4,18) 
--(2,18)
--(2,16) 
--(4,16)

(2,18)
--(0,20) 
;

\end{tikzpicture}

\caption{$T_{W,\kappa}$ with $W=(0,0,1,0,0,2,1,1,1,2,1,0)$ and $\kappa=2$.} 
\label{example}
\end{figure}

\begin{remark}
If we don't specify the branching cardinality $\kappa$, then many bundle graphs have multiple representations from Definition \ref{def:alt_bundle_graph}.  For instance, $T_{(0,2,0),2}$ and $T_{(0,1,0,),4}$ are graph isomorphic (both represent a diamond graph of height 2 with 4 midpoints between top and bottom).  In the first case we think of the graph as being a 2-branching graph and in the second a 4-branching graph.  If we wanted to represent our bundle graphs uniquely, we could combine Definitions \ref{bundle_graph} and \ref{height_depth} into one definition and then require series composition to be allowable only if the bundle graph being copied has a vertex with depth 0 which is neither top nor bottom.  This would induce other requirements on $W$ besides $w_0=w_{M+1}=0$ and yield a unique naming of all bundle graphs (in this case $T_{(0,1,0,),4}$ would be the canonical representation for our example).  However, this is an unnecessary complication for the purpose of this paper.
\end{remark}

\begin{remark}
The only graphs this paper deals with are bundle graphs.  However, in some cases, results concerning other graphs can be recovered.  Note for instance that every tree is (isometrically) contained in some bundle graph as a subgraph.  Indeed, a $\kappa$-branching tree of finite height can be ``doubled'' to obtain a bundle graph containing the tree as its lower half.  For instance, the binary tree of height $3$ is contained in $T_{(0,1,2,3,2,1,0),2}$.
\end{remark}

Now that we have a way to represent a bundle graph with Definition \ref{def:alt_bundle_graph}, the next order of business is deriving a formula for the shortest-path metric.

\begin{lemma}
\label{path}
Let $T_{W,\kappa}=(V,E)$ be a bundle graph and fix $u=(r,A)$ and $v=(s,B)$ in $V$.  If $(t_i,C_i)_{i=0}^n$ is a path between $u$ and $v$, then there is $i\in [n]$ such that $C_i\preceq A\wedge B$.
\end{lemma}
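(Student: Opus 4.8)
The plan is to work inside the tree of all finite sequences over $\kappa$, ordered by the prefix relation $\preceq$, which is where every vertex label lives. The one structural fact I need from the edge set is that whenever $\{(t,C),(t',C')\}\in E$, the labels $C$ and $C'$ are $\preceq$-comparable (one is a prefix of the other). Consequently the label sequence $C_0,C_1,\dots,C_n$ along the path is a walk in which each consecutive pair lies on a common root-to-leaf branch. Since $C_0=A$ and $C_n=B$, the goal is to show that this walk must, at some index, drop to a label that is a prefix of $D:=A\wedge B$.

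First I would dispose of the degenerate cases. If $A\preceq B$ then $A\wedge B=A=C_0$, so $i=0$ works; symmetrically, if $B\preceq A$ then $i=n$ works. So I may assume $A$ and $B$ are $\preceq$-incomparable and set $k=|D|$, so that $k<\min\{|A|,|B|\}$ and the $(k+1)$-st entries $a_{k+1}$ of $A$ and $b_{k+1}$ of $B$ differ. Write $D_A=D^\frown(a_{k+1})$ for the child of $D$ pointing toward $A$. Then $D_A\preceq A=C_0$, whereas $D_A\not\preceq B=C_n$, because $B$ and $D_A$ agree on the first $k$ entries (both extend $D$) but differ on the $(k+1)$-st.

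The heart of the argument is an extremal-index choice. Let $j=\max\{i\in[n]:D_A\preceq C_i\}$; this set contains $0$ and omits $n$, so $0\le j<n$, and by maximality $D_A\not\preceq C_{j+1}$. Now $C_j$ and $C_{j+1}$ are comparable; if $C_j\preceq C_{j+1}$ then $D_A\preceq C_j\preceq C_{j+1}$ would force $j+1$ into the set, a contradiction, so $C_{j+1}$ is a proper prefix of $C_j$. Then $D_A$ and $C_{j+1}$ are both prefixes of $C_j$, hence comparable to each other; since $D_A\not\preceq C_{j+1}$, it follows that $C_{j+1}$ is a proper prefix of $D_A$. Thus $|C_{j+1}|\le k$ and $C_{j+1}$ agrees with $D_A\restriction_{|C_{j+1}|}=D\restriction_{|C_{j+1}|}$, i.e. $C_{j+1}\preceq A\wedge B$, so $i=j+1$ is the desired index.

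The step I expect to require the most care is recognizing why one cannot run a naive induction on the walk that tracks the label length as changing by exactly one per step: because the depth sequence $W$ may jump by more than one between consecutive heights, a single edge can change a label's length by more than one. The extremal-index argument is designed precisely to sidestep this, since it uses only the comparability of consecutive labels and never the size of the jump. The remaining bookkeeping---verifying $D_A\not\preceq B$ and that a proper prefix of $D_A$ of length $\le k$ coincides with the corresponding prefix of $D$---is routine from the definitions of $\wedge$ and $\preceq$.
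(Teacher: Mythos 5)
Your proof is correct, and it takes a genuinely different route from the paper's. The paper argues by contradiction: assuming no $C_i$ is a prefix of $A\wedge B$, it runs a forward induction along the path maintaining the two-part invariant $|C_k|\geq |A\wedge B|+1$ and $|C_k\wedge B|=|A\wedge B|$, and then derives a contradiction at the endpoint $C_n=B$. You instead give a direct argument: you isolate the child $D_A=(A\wedge B)^\frown(a_{k+1})$ of $A\wedge B$ on the $A$-side, take the \emph{last} index $j$ at which $D_A\preceq C_j$, and show that the very next label must be a proper prefix of $D_A$, hence a prefix of $A\wedge B$. Both arguments rest on the same structural fact (consecutive labels along a path are $\preceq$-comparable) and on the same underlying intuition that one cannot switch from the $A$-branch to the $B$-branch without passing below their meet; but your extremal-index formulation avoids tracking the invariant step by step, is constructive in that it pinpoints where the required vertex occurs (immediately after the last visit to the subtree above $D_A$), and sidesteps entirely the bookkeeping with $|C_k\wedge B|$ that occupies most of the paper's proof. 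Your closing remark about label lengths possibly jumping by more than one per edge is well taken and is precisely why the comparability-only argument is the right level of abstraction.
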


\begin{proof}
Suppose the statement is false and let $(t_i,C_i)_{i=0}^n$ be a path starting at $u$ and ending at $v$ such that $C_i\npreceq A\wedge B$ for all $i\in [n]$.  Then in particular, $C_0=A\npreceq B$ and so $|C_0|\geq |A\wedge B|+1$ (while $|C_0\wedge B|=|A\wedge B|$).  Now suppose $k\in [n-1]$ is such that for all $i\in [k]$, $|C_i|\geq |A\wedge B|+1$ while $|C_i\wedge B|=|A\wedge B|$ (which implies $A\wedge B \preceq C_k$).  Either $C_k\preceq C_{k+1}$ or $C_{k+1}\preceq C_k$.  If $C_k\preceq C_{k+1}$, then $|C_{k+1}|\geq |C_k|\geq |A\wedge B|+1$ while $|C_{k+1}\wedge B|=|C_k\wedge B|=|A\wedge B|$.  So suppose $C_{k+1}\preceq C_k$.  If $|C_{k+1}|\leq |A\wedge B|$, then $C_{k+1}\preceq A\wedge B\preceq B$ since $|C_k\wedge B|=|A\wedge B|$, contradicting the choice of path.  Thus $C_{k+1}\geq |A\wedge B|+1$, and so if $|C_{k+1}\wedge B|<|A\wedge B|$, then $C_{k+1}\npreceq B$, which implies $|A\wedge B|=|C_k\wedge B|=|C_{k+1}\wedge B|<|A\wedge B|$, a contradiction.  Thus $|C_{k+1}\wedge B|\geq |A\wedge B|$.  And if $|C_{k+1}\wedge B|>|A\wedge B|$, then $|A\wedge B|=|C_{k}\wedge B|\geq |C_{k+1}\wedge B|>|A\wedge B|$, a contradiction.  Thus $|C_{k+1}\wedge B|=|A\wedge B|$.  By induction, it has been shown that for every $k\in [n]$, $|C_k|\geq |A\wedge B|+1$ while $|C_k\wedge B|=|A\wedge B|$.
In particular, $|B|=|C_n|\geq |A\wedge B|+1>|A\wedge B|=|C_n\wedge B|=|B|$, a contradiction.  Thus no such path $(t_i,C_i)_{i=0}^n$ exists which falsifies the statement of the lemma, and so the lemma is proved.
\end{proof}

By Lemma \ref{path}, a path between two vertices $u=(r,A)$ and $v=(s,B)$ in a bundle graph must contain a vertex $(t,C)$ such that both $C\preceq A$ and $C\preceq B$.  There are two cases to consider when trying to create a shortest path between $u$ and $v$:  Either such a vertex $(t,C)$ can be found so that $t$ is between $r$ and $s$, or not.  In either case we have $w_t=|C|\leq |A\wedge B|$.

We introduce some notation to differentiate these two possibilities.
Given two vertices $u=(r,A)$ and $v=(s,B)$ in a bundle graph, we will write $u\Updownarrow v$ to mean that there is $t\in[M+1]$ between $r$ and $s$ (inclusive) such that $w_t\leq |A\wedge B|$.  We will write $u\ \cancel{\Updownarrow}\ v$ to mean the opposite.  Note, in particular, that $u\Updownarrow v$ if $A\preceq B$ and $|A\wedge B|<\min\{|A|,|B|\}$ if $u\ \cancel{\Updownarrow}\ v$.

\begin{definition}
\label{ancestor}
Given two vertices $u=(r,A)$ and $v=(s,B)$ in a bundle graph, $u$ is said to be an \emph{ancestor} of $v$ and $v$ is said to be a \emph{descendant} of $u$ if $u\Updownarrow v$ and $r\leq s$.
\end{definition}

We'll show in the following proposition that the distance between a vertex in a bundle graph and one of its ancestors or descendants is simply the difference in height between the two.  By Lemma \ref{path}, finding the distance between two arbitrary vertices in a bundle graph thus amounts to finding a highest common ancestor or lowest common descendant of the two vertices.  

For example, using Figure \ref{example}, one may see that $(5,(1,1))-(6,(1))-(7,(1))-(8,(1))-(9,(1,0))$ is a shortest path between $(5,(1,1))$ and $(9,(1,0))$.  But a path between $(5,(1,1))$ and $(9,(0,1))$ must first go through either $(4,\emptyset)$ or $(11,\emptyset)$.

Given two vertices $u=(r,A)$ and $v=(s,B)$ in a bundle graph $T_{W,\kappa}$, we define $n(u,v)$ and $m(u,v)$ by 
\begin{align*}
n(u,v)&=\max\{t\in [M+1]\ |\ w_t\leq |A\wedge B|\mbox{ and } t\leq \min\{r,s\}\},\\
m(u,v)&=
\min\{t\in[M+1]\ |\ w_t\leq |A\wedge B|\mbox{ and } t\geq \max\{r,s\}\}.
\end{align*}

Following the definitions, one sees that if $u\ \cancel{\Updownarrow}\ v$, then $\left(n(u,v), A\wedge B \restriction_{w_{n(u,v)}}\right)$ is the highest common ancestor of $u$ and $v$ and $\left(m(u,v), A\wedge B \restriction_{w_{m(u,v)}}\right)$ is the lowest common descendant of $u$ and $v$.

\begin{proposition}
\label{dist}
Let $T_{W,\kappa}=(V,E)$ be a bundle graph with shortest-path metric $d$, and fix $u=(r,A)$ and $v=(s,B)$ in $V$.  Then
\[d(u,v)=
\begin{cases}
|r-s| & u\Updownarrow v\\
\min\{r+s-2n(u,v), 2m(u,v)-(r+s)\} & u\ \cancel{\Updownarrow}\ v 
\end{cases}
\]
\end{proposition}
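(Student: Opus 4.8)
The plan is to treat the two cases separately, proving matching lower and upper bounds in each. I first observe that every quantity in the statement ($|r-s|$, $n(u,v)$, $m(u,v)$, and the relation $\Updownarrow$) is symmetric in $u$ and $v$, so I may assume $r\le s$ throughout. The workhorse for the upper bounds is the construction of a single \emph{monotone} path (one whose height coordinate increases by $1$ at every step), and the case that requires care is $u\Updownarrow v$; the case $u\ \cancel{\Updownarrow}\ v$ will then follow from it by routing through the common ancestor and descendant.

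For the lower bounds I would use that adjacent vertices differ in height by exactly $1$: any path $(t_i,C_i)_{i=0}^N$ from $u$ to $v$ satisfies $|r-s|=|t_0-t_N|\le N$, which already gives $d(u,v)\ge |r-s|$ and settles the lower bound when $u\Updownarrow v$. When $u\ \cancel{\Updownarrow}\ v$, I would invoke Lemma \ref{path} to produce a vertex $(t,C)$ on the path with $C\preceq A\wedge B$, hence $w_t=|C|\le |A\wedge B|$. Since $u\ \cancel{\Updownarrow}\ v$ forbids such a height inside $[r,s]$, either $t<r$ (whence $t\le n(u,v)$) or $t>s$ (whence $t\ge m(u,v)$). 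Splitting the path at $(t,C)$ and bounding each piece below by its net height change yields $N\ge (r-t)+(s-t)=r+s-2t\ge r+s-2n(u,v)$ in the first case and $N\ge 2t-(r+s)\ge 2m(u,v)-(r+s)$ in the second, so $d(u,v)$ is at least the stated minimum.

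For the upper bound when $u\Updownarrow v$, I would fix $t_0\in[r,s]$ with $w_{t_0}\le |A\wedge B|$ and let $\tilde A$ and $\tilde B$ denote $A$ and $B$ padded with an infinite tail of $0$'s. I then define a path $(t,C_t)_{t=r}^{s}$ by $C_t=\tilde A\restriction_{w_t}$ for $r\le t\le t_0$ and $C_t=\tilde B\restriction_{w_t}$ for $t_0\le t\le s$. The padding is precisely what sidesteps the main technical nuisance: $w_t$ may exceed $|A|$ or $|B|$ at intermediate heights, so a naive truncation of $A$ or $B$ would fail to have the required length $w_t$, whereas each $C_t$ as defined is a genuine element of $\kappa^{w_t}$. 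Consecutive labels within each half are prefixes of a common infinite sequence and hence prefix-comparable, so they span edges; the two definitions agree at $t_0$ because $w_{t_0}\le |A\wedge B|$ forces $\tilde A\restriction_{w_{t_0}}=(A\wedge B)\restriction_{w_{t_0}}=\tilde B\restriction_{w_{t_0}}$. This exhibits a path of length $s-r$, giving $d(u,v)\le |r-s|$ and hence equality. I expect this monotone-path construction to be the one delicate step.

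Finally, for $u\ \cancel{\Updownarrow}\ v$, I would use the discussion preceding the proposition, which identifies the highest common ancestor $w^-=\bigl(n(u,v),(A\wedge B)\restriction_{w_{n(u,v)}}\bigr)$ and the lowest common descendant $w^+=\bigl(m(u,v),(A\wedge B)\restriction_{w_{m(u,v)}}\bigr)$. Since $(A\wedge B)\restriction_{w_{n(u,v)}}$ is a prefix of $A$, the height $n(u,v)$ itself witnesses $u\Updownarrow w^-$ (in the sense of Definition \ref{ancestor}), and symmetrically for $v$, $w^+$; the already-proved case then gives $d(u,w^-)=r-n(u,v)$, $d(v,w^-)=s-n(u,v)$, $d(u,w^+)=m(u,v)-r$, and $d(v,w^+)=m(u,v)-s$. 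Applying the triangle inequality through $w^-$ and through $w^+$ yields $d(u,v)\le r+s-2n(u,v)$ and $d(u,v)\le 2m(u,v)-(r+s)$, which match the lower bound and complete the proof.
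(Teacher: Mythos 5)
Your proof is correct and follows essentially the same route as the paper's: lower bounds from net height changes together with Lemma \ref{path}, and upper bounds from a monotone path through a vertex labelled by a prefix of $A\wedge B$, then routing through the highest common ancestor or lowest common descendant when $u\ \cancel{\Updownarrow}\ v$. Your explicit zero-padded path and the spelled-out lower bound in the second case merely make precise what the paper's recursive construction and terse final paragraph leave implicit.
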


\begin{proof}
By the definition of the edge set $E$, $d(u,v)\geq |r-s|$.
Suppose first that $A\preceq B$.
Recursively construct the sequence $(C_i)_{i=0}^{|r-s|}$ by letting 
\[C_0=\begin{cases}
A & r\leq s\\
B & s<r
\end{cases}
\]
and given $C_{i-1}$ for $i\in [|r-s|]\setminus \{0\}$, choosing $C_i\in \kappa^{w_{\min \{r,s\}+i}}$ so that $C_{i-1}\wedge C_i\in \{C_{i-1},C_i\}$ and $C_i\wedge B\in\{C_i,B\}$.
Then $((\min \{r,s\}+i, C_i))_{i=0}^{|r-s|}$ is a path between $u$ and $v$, and so $d(u,v)\leq|r-s|$.  That is, $d(u,v)=|r-s|$.

Now, if $u\Updownarrow v$, then there is $t$ between $r$ and $s$ such that $|A\wedge B|\geq w_t$.  Thus, by what was shown above,
\begin{align*}
d(u,v)&\leq d\left(u, \left(t,A\wedge B\restriction_{w_t}\right)\right)+d\left(\left(t,A\wedge B\restriction_{w_t}\right),v\right)\\
&=|r-t|+|t-s|\\
&=|r-s| 
\end{align*} 
and so $d(u,v)=|r-s|$.

If $u\ \cancel{\Updownarrow}\ v$, then by Lemma \ref{path} a shortest path between $u$ and $v$ must contain a vertex that is either a common ancestor or a common descendant of $u$ and $v$.  The result follows from what was shown above by taking the minimum of lengths of paths between $u$ and $v$ that contain either the highest common ancestor or lowest common descendant.
\end{proof}

Continuing our example from Figure \ref{example}, one may check that $w_7=1\leq |(1)|=|(1,1)\wedge (1,0)|$.  Thus $(5,(1,1))\Updownarrow (9, (1,0))$ and so $d((5,(1,1),(9,(1,0)))=9-5=4$ by Proposition \ref{dist}.
Similarly, there is no $t\in[11]$ between $5$ and $9$ such that $w_t\leq 0=|\emptyset|=|(1,1)\wedge (0,1)|$, meaning $(5,(1,1))\ \cancel{\Updownarrow}\  (9,(1,0))$.  One may determine that $n((5,(1,1)),(9,(0,1)))=4$ and $m((5,(1,1)),(9,(0,1)))=11$.  By Proposition \ref{dist}, $d((5,(1,1)),(9,(0,1)))=\min\{5+9-2\cdot 4, 2\cdot 11 -(9+5)\}=6$.

Throughout the next few sections, we'll describe various bi-Lipschitz embeddings of bundle graphs into Banach spaces.  We define now the coefficients that will regularly be used, and fix for the rest of the paper $W=(w_r)_{r=0}^{M+1}\subseteq \mathbb{N}_0$ such that $w_0=w_{M+1}=0$.  For $r\in [M+1]$ and $i\in \mathbb{N}_0$, define $x(r,i)$, $y(r,i)$, and $z(r,i)$ by
\begin{align*}
x(r,i)&=\begin{cases}
0 & i=0\\
\max\{t\in [M+1]\ |\ w_t<i \mbox{ and } t\leq r\} & i>0,
\end{cases}\\
y(r,i)&=
\begin{cases}
M+1 & i=0\\
\min\{t\in [M+1]\ |\ w_t<i \mbox{ and } t\geq r\} & i>0,
\end{cases}\\
z(r,i) & =\begin{cases}
r & i=0\\
\min \{r-x(r,i), y(r,i)-r\} & i>0.
\end{cases}
\end{align*}

Given $r\in [M+1]$ and $i\in \mathbb{N}$, $x(r,i)$ records the last height no greater than $r$ in which the vertices of a bundle graph associated with $W$ have depth less than $i$.  Similarly, $y(r,i)$ records the first height no lesser than $r$ in which the vertices of a bundle graph associated with $W$ have depth less than $i$.  And $z(r,i)$ simply records the distance one would have to travel from height $r$ to get to a vertex with depth less than $i$ in a bundle graph associated with $W$.

\section{Embedding into Banach spaces with good $\ell_\infty$-trees}
\label{sec:auc}
In this section, we show that for any countable cardinality $\kappa$, $T_{W,\kappa}$ is bi-Lipschitzly embeddable into any Banach space with a good $\ell_\infty$-tree of height $\max W$ with distortion bounded above by a constant depending only on the good $\ell_\infty$-tree.  For each $n\in \mathbb{N}$, we define the function $T_n\colon \aleph_0^{\leq n}\to \aleph_0$ by $T_n(\sigma)= \sum_{r=1}^{|\sigma|}(\sigma(r)+1)$ for each $\sigma\in \aleph_0^{\leq n}$.

\begin{definition}
Fix $n\in \mathbb{N}$ and let $(\sigma_i)_{i=0}^\infty$ be an enumeration of $\aleph_0^{\leq n}$ such that $i_1\leq i_2$ whenever $T_n(\sigma_{i_1})\leq T_n(\sigma_{i_2})$.
Given a Banach space $X$ and $C,D>0$, a sequence $(y_{\sigma_i})_{i=0}^\infty\subseteq S_X$ is called a \emph{$(C,D)$-good $\ell_\infty$-tree of height $n$} if, given any $(\alpha_i)_{i=0}^\infty\subseteq \mathbb{R}$,
\begin{enumerate}[(i)]
\item $1/C\|(\alpha_i)_{i=0}^n\|_\infty\leq \|\sum_{B\preceq A}\alpha_{|B|}y_B\|_X\leq C\|(\alpha_i)_{i=0}^n\|_\infty$
for all $A\in \aleph_0^n$,
\item $\left\|\sum_{i=0}^{m_1} \alpha_i y_{\sigma_i}\right\|_X\leq D \left\|\sum_{i=0}^{m_2}\alpha_i y_{\sigma_i}\right\|_X$
for all $m_1,m_2\in \mathbb{N}_0$ such that $m_1\leq m_2$.
\end{enumerate}
\end{definition} 

The first condition states that every ``branch'' $(y_B)_{B\preceq A}$ of the good $\ell_\infty$-tree is $C^2$-equivalent to the unit vector basis of $\ell_\infty^{n+1}$.  The second condition states that the sequence making up the good $\ell_\infty$-tree is basic with basis constant less than or equal to $D$.

\begin{theorem}
\label{thm:ell_inf}
Fix a countable cardinality $\kappa$ and suppose $X$ is a Banach space containing a $(C,D)$-good $\ell_\infty$-tree $(y_{\sigma_i})_{i=0}^\infty$ of height $\max W$ for some $C,D>0$.  Then there is a bi-Lipschitz embedding $\psi\colon T_{W,\kappa}\to X$ such that for all $u,v\in V(T_{W,\kappa})$,
\[\frac{1}{3D(1+D)}d(u,v)\leq \|\psi(u)-\psi(v)\|_X\leq Cd(u,v),\]
where $d$ is the shortest-path metric for $T_{W,\kappa}$, and furthermore, $\|\psi(u)-\psi(v)\|_X\geq d(u,v)/D$ when $u\Updownarrow v$.
\end{theorem}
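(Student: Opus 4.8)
The plan is to exhibit one explicit map, built from the escape-distance coefficients $z(r,i)$, and to read both inequalities off conditions (i) and (ii). I would define
\[
\psi(r,A)=\sum_{i=0}^{w_r} z(r,i)\,y_{A\restriction_i},
\]
viewing $A\restriction_i\in\kappa^i$ as an element of $\aleph_0^{\le\max W}$ through a fixed injection $\kappa\hookrightarrow\aleph_0$, so that each $y_{A\restriction_i}$ is a genuine node of the tree (here $w_r\le\max W$). For fixed $A$ the nodes $y_{A\restriction_0},\dots,y_{A\restriction_{w_r}}$ lie along a single branch and have distinct lengths, so $\psi(r,A)$ is exactly a sum of the kind governed by condition (i). I would first record three elementary facts about the coefficients: $i\mapsto z(r,i)$ is non-increasing; $r\mapsto z(r,i)$ is $1$-Lipschitz, being the distance from $r$ to the set $\{t:w_t<i\}$; and $z(r,i)\le|r-t|$ whenever $w_t<i$.

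For the upper bound I would route through a common ancestor or descendant lying on a geodesic. If $u\Updownarrow v$, pick $t$ between $r$ and $s$ with $w_t\le|A\wedge B|$ and set $w=(t,(A\wedge B)\restriction_{w_t})$; if $u\cancel{\Updownarrow}v$, let $w$ be whichever of the highest common ancestor or lowest common descendant realizes $d(u,v)$. In each case the label of $w$ is a prefix of both $A$ and $B$, so $\psi(u)-\psi(w)$ and $\psi(w)-\psi(v)$ are each supported on a single branch. The three recorded facts show that every coefficient of $\psi(u)-\psi(w)$ has absolute value at most $d(u,w)$, so the upper half of condition (i) gives $\|\psi(u)-\psi(w)\|\le C\,d(u,w)$, and similarly for the other leg; as $w$ is on a geodesic these sum to $C\,d(u,v)$.

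For the lower bound the key point is that the coefficient of the root $y_\emptyset=y_{\sigma_0}$ in $\psi(u)-\psi(v)$ equals $z(r,0)-z(s,0)=r-s$. Since the canonical basis projections of $(y_{\sigma_i})$ have norm at most $D$ by condition (ii), extracting the first coordinate yields $|r-s|\le D\|\psi(u)-\psi(v)\|$, which is the asserted bound $d(u,v)/D$ exactly when $u\Updownarrow v$. When $u\cancel{\Updownarrow}v$ we have $k:=|A\wedge B|<\min\{w_r,w_s\}$, so the nodes $y_{A\restriction_{k+1}}$ and $y_{B\restriction_{k+1}}$ each appear in only one of $\psi(u),\psi(v)$, with coefficients $z(r,k+1)$ and $z(s,k+1)$. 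Extracting any single coordinate costs at most a factor $2D$, so $\|\psi(u)-\psi(v)\|$ is at least $\tfrac1{2D}\max\{|r-s|,z(r,k+1),z(s,k+1)\}$.

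The main obstacle is the concluding metric estimate showing this maximum controls $d(u,v)$. Setting $n=n(u,v)$, $m=m(u,v)$ and $p=r-n$, $q=m-r$, $p'=s-n$, $q'=m-s$, one checks $z(r,k+1)=\min\{p,q\}$, $z(s,k+1)=\min\{p',q'\}$, $|r-s|=|p-p'|$, and $d(u,v)=\min\{p+p',q+q'\}$ with $p+q=p'+q'$; the identification $z(r,k+1)=\min\{p,q\}$ uses that $u\cancel{\Updownarrow}v$ forbids any height of depth $\le k$ strictly between $r$ and $s$. It then remains to verify the elementary inequality
\[
|p-p'|+\min\{p,q\}+\min\{p',q'\}\ \ge\ \min\{p+p',\,q+q'\},
\]
which I would prove by assuming $p\ge p'$ and splitting according to which term realizes the right-hand minimum. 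Consequently the maximum above is at least $d(u,v)/3$, so $\|\psi(u)-\psi(v)\|\ge d(u,v)/(6D)$, and since $D\ge1$ this exceeds $d(u,v)/(3D(1+D))$, completing the estimate. I expect this elementary inequality, together with the bookkeeping that identifies the extractable coefficients with the quantities $\min\{p,q\}$ and $\min\{p',q'\}$, to be the only genuinely delicate part; the rest is a direct application of conditions (i) and (ii).
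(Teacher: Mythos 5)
Your proposal is correct and follows essentially the same route as the paper: the identical map $\psi(r,A)=\sum_{B\preceq A}z(r,|B|)y_B$, the upper bound from condition (i) applied along a geodesic, the coefficient of $y_\emptyset$ for the $u\Updownarrow v$ case, and the extraction of the three coefficients $|r-s|$, $z(r,|A\wedge B|+1)$, $z(s,|A\wedge B|+1)$ followed by the same ``max of three $\geq d(u,v)/3$'' inequality. The only differences are cosmetic: you bound single-coordinate extraction by $2D$ rather than the paper's $D(1+D)$, which (since $D\geq 1$) gives a constant at least as good as the one claimed.
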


\begin{proof}
Define the map $\psi\colon T_{W,\kappa}\to X$ by 
\[ \psi((r,A))= \sum_{B\preceq A}z(r,|B|) y_B\]
for every $(r,A)\in V(T_{W,\kappa})$.

Take any $u=(r,A)$ and $v=(s,B)$ in $V(T_{W,\kappa})$ and suppose first that $u$ and $v$ are adjacent with $A\preceq B$.
Then,
\begin{align*}
\|\psi(u)-\psi(v)\|_X&=\left\|\sum_{E\preceq A}z(r,|E|)y_E-\sum_{E\preceq B}z(s,|E|)y_E\right\|_X\\
&=\left\|\sum_{E\preceq B}(z(r,|E|)-z(s,|E|))y_E\right\|_X\\
&\leq C\max_{0\leq i\leq w_{s}}\{|z(r,i)-z(s,i)|\}\\
&\leq C.
\end{align*}
The triangle inequality applied to shortest paths then shows that $\|\psi(u)-\psi(v)\|_X\leq Cd(u,v)$ for all $u,v\in V(T_{W,\kappa})$.

For the left-hand inequality, take any $u=(r,A)$ and $v=(s,B)$ in $V(T_{W,\kappa})$, and suppose first that $u\Updownarrow v$.  Then
\[
\|\psi(u)-\psi(v)\|_X \geq  \frac{1}{D}|z(r,0)-z(s,0)|=\frac{1}{D}|r-s|=\frac{1}{D}d(u,v).\]

Suppose now that $u\ \cancel{\Updownarrow}\ v$.
Note that $n(u,v)=x(r,|A\wedge B|+1)=x(s,|A\wedge B|+1)$ and $m(u,v)=y(r,|A\wedge B|+1)=y(s,|A\wedge B|+1)$.
Let $n_1\in \mathbb{N}_0$ be such that $\sigma_{n_1}\preceq A$ and $|\sigma_{n_1}|=|A\wedge B|+1$.
Similarly, let $n_2\in \mathbb{N}_0$ be such that $\sigma_{n_2}\preceq B$ and $|\sigma_{n_2}|=|A\wedge B|+1$.
For $i\in \mathbb{N}_0$, let \[\alpha_i=
\begin{cases}
z(r,|\sigma_i|)-z(s,|\sigma_i|) & \sigma_i \preceq A\wedge B\\
z(r,|\sigma_i|) & A\wedge B\prec \sigma_i \preceq A\\
-z(s,|\sigma_i|) & A\wedge B \prec \sigma_i \preceq B\\
0 & \mbox{otherwise}.
\end{cases}
\]
Then, by Proposition \ref{dist},
\begin{align*}
\left\|\psi(u)-\psi(v)\right\|_X &= \left\|\sum_{i=0}^\infty \alpha_i y_{\sigma_i}\right\|_X\\
&\geq \frac{1}{D}\max \left\{\left\|\sum_{i=0}^{n_1}\alpha_iy_{\sigma_i}\right\|_X,\left\|\sum_{i=1}^{n_2}\alpha_iy_{\sigma_i}\right\|_X, \|\alpha_0y_{\emptyset}\|_X\right\}\\
&\geq \frac{1}{D(1+D)}\max\{|\alpha_{n_1}|,|\alpha_{n_2}|, |\alpha_0|\}\\
&
= \frac{1}{D(1+D)}\max\{z(r, |A\wedge B|+1),z(s,|A\wedge B|+1),|z(r,0)-z(s,0)|\}\\
& \begin{aligned}
= \frac{1}{D(1+D)} \max\{& \min\{r-n(u,v),m(u,v)-r\},\\
& \min\{s-n(u,v),m(u,v)-s\}, |r-s|\}
\end{aligned}\\
& \geq \frac{1}{3D(1+D)}d(u,v).
\qedhere
\end{align*}
\end{proof}

\begin{remark}
In the proof above, $1+D$ appears by using the triangle inequality and the fact that $D$ is a monotonicity constant for the sequence $(y_{\sigma_i})_{i=0}^\infty$.  One may replace $1+D$ with $D$ if $D$ is actually a \emph{bimonotonicity} constant.
\end{remark}

Theorem \ref{thm:ell_inf} generalizes Theorem 3.1 in \cite{baudier_etal}.  It was also shown in \cite{baudier_etal} that any reflexive Banach space with an unconditional asymptotic structure which is not asymptotically uniformly convexifiable will contain $(1+\varepsilon,1+\varepsilon)$-good $\ell_\infty$-trees of arbitrary height, for any $\varepsilon>0$.  Thus, Theorem \ref{thm:ell_inf} yields the following corollary.

\begin{corollary}
\label{cor:auc}
For any $\varepsilon>0$, every countably-branching bundle graph is bi-Lipschitzly embeddable with distortion bounded above by $6+\varepsilon$ into any reflexive Banach space with an unconditional asymptotic structure which is not asymptotically uniformly convexifiable.
\end{corollary}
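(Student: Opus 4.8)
The plan is to read this off from Theorem \ref{thm:ell_inf} together with the structural fact quoted from \cite{baudier_etal} immediately preceding the statement; there is essentially nothing to prove beyond an optimization over the quality parameters of the good $\ell_\infty$-tree. Fix a countably-branching bundle graph, which by Definition \ref{def:alt_bundle_graph} is of the form $T_{W,\kappa}$ for some finite sequence $W=(w_r)_{r=0}^{M+1}$ and some countable $\kappa$, and let $X$ be a reflexive Banach space with an unconditional asymptotic structure that is not asymptotically uniformly convexifiable. The key observation is that $\max W$ is a fixed finite number once the bundle graph is fixed, so the height of the $\ell_\infty$-tree that Theorem \ref{thm:ell_inf} requires is finite.

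First I would fix an auxiliary parameter $\delta>0$ (to be chosen small at the end, and kept notationally distinct from the target $\varepsilon$). By the cited result of \cite{baudier_etal}, $X$ contains a $(1+\delta,1+\delta)$-good $\ell_\infty$-tree of height $\max W$. I would then apply Theorem \ref{thm:ell_inf} with $C=D=1+\delta$ and with $\kappa$ the given branching cardinality, obtaining a bi-Lipschitz embedding $\psi\colon T_{W,\kappa}\to X$ whose lower and upper Lipschitz constants are $\tfrac{1}{3D(1+D)}$ and $C$ respectively.

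It then remains to compute the distortion and let $\delta$ tend to $0$. The distortion of $\psi$ is at most the ratio $C/\tfrac{1}{3D(1+D)}=3CD(1+D)=3(1+\delta)^2(2+\delta)$, which is continuous in $\delta$ and equals $6$ at $\delta=0$. Hence, given $\varepsilon>0$, one simply chooses $\delta>0$ small enough that $3(1+\delta)^2(2+\delta)\le 6+\varepsilon$, and the resulting $\psi$ witnesses $c_X(T_{W,\kappa})\le 6+\varepsilon$.

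Since the substantive work (the metric formula of Proposition \ref{dist}, the construction of $\psi$ and the estimates in Theorem \ref{thm:ell_inf}, as well as the existence of the good $\ell_\infty$-trees) is already in place, I do not expect any genuine obstacle here. The only points requiring care are purely bookkeeping: confirming that the finite height $\max W$ of the fixed bundle graph is among the ``arbitrary heights'' provided by \cite{baudier_etal}, and tracking that the distortion bound $3CD(1+D)$ specializes to the advertised limiting value $6$ as $C=D\to 1$.
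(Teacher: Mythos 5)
Your proposal is correct and follows exactly the route the paper intends: the paper derives Corollary \ref{cor:auc} directly from Theorem \ref{thm:ell_inf} applied to the $(1+\varepsilon,1+\varepsilon)$-good $\ell_\infty$-trees of height $\max W$ supplied by \cite{baudier_etal}, with the same distortion computation $3CD(1+D)\to 6$ as $C=D\to 1$. The only difference is that you spell out the bookkeeping the paper leaves implicit.
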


Finally, in \cite{baudier_etal} it was shown that if a Banach space $X$ is asymptotically midpoint uniformly convexifiable, then no family of bundle graphs with nontrivial (meaning there is a vertex with nonzero depth) $\aleph_0$-branching base graph is equi-bi-Lipschitzly embeddable into $X$.  This fact combined with Corollary \ref{cor:auc} actually shows that the non-equi-bi-Lipschitz embeddability of any family of bundle graphs generated by a nontrivial $\aleph_0$-branching bundle graph characterizes asymptotic uniform convexifiability within the class of reflexive Banach spaces with an asymptotic unconditional structure.  We'll recall the definition of a family of bundle graphs generated from a base graph in Section \ref{sec:oslash}.

\section{Embedding into $L_1$}
\label{sec:L1}
In this section we show that for any countable cardinality $\kappa$, $T_{W,\kappa}$ is bi-Lipschitzly embeddable into $L_1$ (we'll use $L_1([0,M+1])$) with distortion bounded above by 2.  For each $v\in V(T_{W,\kappa})$, we'll map $v$ to the characteristic function of some set.  To get the distortion we desire, we need to make sure that the symmetric differences of the sets involved are large enough in Lebesgue measure $\lambda$.  The construction is somewhat technical, but is essentially done through intersections of supports of independent Bernoulli random variables.

Let $D$ be any common multiple of the numbers in $[M+1]\setminus\{0\}$. Let $\mathcal{F}$ be the family of finite unions of open subintervals of $[0,M+1]$ such that for each $P\in \mathcal{F}$, there is $N\in \mathbb{N}_0$ such that each maximal (with respect to set containment) subinterval of $P$ is equal to $\left(\frac{qD}{D^{N+1}},\frac{qD+r}{D^{N+1}}\right)$ for some $q\in[(M+1)D^{N}-1]$ and $r\in[D]$; and given $P\in \mathcal{F}$, let $N(P)$ be the minimum of such $N$ associated with $P$.  Note here that if $P,P'\in \mathcal{F}$ are such that $N(P)<N(P')$, then every subinterval of $P'$ is either contained in a subinterval of $P$ or has empty intersection with $P$.

Let $(\sigma_i)_{i=0}^\infty$ be an enumeration of $\aleph_0^{\leq \max W}$ and let $(P_i)_{i=0}^\infty$ be an enumeration of $\mathcal{F}$ such that $P_0=\emptyset$ and $N(P_i)\leq i$ for all $i\in \mathbb{N}_0$.  For each $i,j\in \mathbb{N}_0$, let $\theta(i,j)=2^i3^j-N(P_j)$.  Define $f\colon \mathbb{\aleph}_0^{\leq \max W}\times \mathcal{F}\times [D]\to \mathcal{F}$ by 
\[f(\sigma_i,P_j,k)=\bigcup_{\ell=0}^n\bigcup_{m=0}^{D^{\theta(i,j)}-1}\left(\alpha_\ell+\frac{mD(\beta_\ell-\alpha_\ell)}{D^{\theta(i,j)+1}},\alpha_\ell+\frac{(mD+k)(\beta_\ell-\alpha_\ell)}{D^{\theta(i,j)+1}}\right)\]
whenever $P_j=\bigsqcup_{\ell=0}^n (\alpha_\ell,\beta_\ell)$ (where $\sqcup$ means disjoint union), for all $i,j\in\mathbb{N}_0$ and $k\in[D]$.

We'll list the properties we need from $f$ in the following lemma, but one may think of $f(A,P,k)$ as the intersection of $P$ with the support of a Bernoulli random variable that has probability of success equal to $k/D$.  And if $B\neq A$, then $f(B,P,k)$ does the same thing, but with a random variable that is independent from that used for $f(A,P,k)$.

\begin{lemma}
\label{set theory}
The following hold for all $i,j\in \mathbb{N}_0$ and $k\in [D]$:
\begin{enumerate}[(i)]
\item $f(\sigma_i, P_j, k)\subseteq f(\sigma_i,P_j,k')\subseteq P$ if $k'\in[D]$ is such that $k\leq k'$.
\item $2^i3^j-1\leq N(f(\sigma_i,P_j,k))\leq 2^i3^j+2$ if $j\neq 0$ and $k\neq 0$.
\item $\lambda f(\sigma_i,P_j,k)=\frac{k}{D}\lambda (P_j)$.
\item $\lambda (P\cap f(\sigma_i,P_j,k))=\frac{k}{D}\lambda (P\cap P_j)$ if $P\in\mathcal{F}$ is such that $N(P)< N(P_j)$.
\end{enumerate}
\end{lemma}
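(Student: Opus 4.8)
The plan is to reduce all four parts to a single explicit description of the maximal subintervals of $f(\sigma_i,P_j,k)$. Writing $P_j=\bigsqcup_{\ell=0}^n(\alpha_\ell,\beta_\ell)$, each component satisfies $\alpha_\ell=q_\ell/D^{N(P_j)}$ and $\beta_\ell-\alpha_\ell=r_\ell/D^{N(P_j)+1}$ with $r_\ell\in\{1,\dots,D\}$. Setting $\theta=\theta(i,j)$ and using the defining relation $\theta(i,j)=2^i3^j-N(P_j)$, so that $\theta+N(P_j)=2^i3^j$, the generating interval indexed by $(\ell,m)$ simplifies to $\left(\frac{q_\ell D^{\theta+2}+mDr_\ell}{D^{2^i3^j+2}},\frac{q_\ell D^{\theta+2}+(mD+k)r_\ell}{D^{2^i3^j+2}}\right)$, whose left endpoint lies on the grid of spacing $D^{-(2^i3^j+1)}$ and whose length is exactly $kr_\ell/D^{2^i3^j+2}$. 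I would first record the structural fact that for $1\le k\le D-1$ these $D^\theta$ intervals are pairwise separated inside $(\alpha_\ell,\beta_\ell)$, and that even for $k=D$ they remain the maximal subintervals since consecutive open intervals share only an excluded endpoint. With this in hand (i) and (iii) are immediate: increasing $k$ enlarges the right endpoint of each generating interval while fixing the left, giving $f(\sigma_i,P_j,k)\subseteq f(\sigma_i,P_j,k')$, and each generating interval sits in $(\alpha_\ell,\beta_\ell)$ and hence in $P_j$; while for (iii) the $D^\theta$ values of $m$ make the $\ell$-th component contribute $D^\theta\cdot\frac{k(\beta_\ell-\alpha_\ell)}{D^{\theta+1}}=\frac{k}{D}(\beta_\ell-\alpha_\ell)$, and summing over $\ell$ yields $\lambda f(\sigma_i,P_j,k)=\frac{k}{D}\lambda(P_j)$.

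For (ii) I would argue purely from the scale of the lengths, never computing $N(f(\sigma_i,P_j,k))$ exactly. Since $1\le k,r_\ell\le D$, every maximal subinterval has length in $[D^{-(2^i3^j+2)},D^{-2^i3^j}]$. If a level $N$ represents $f(\sigma_i,P_j,k)$, then every maximal subinterval must have length in $[D^{-(N+1)},D^{-N}]$, because its $r$-value lies in $\{1,\dots,D\}$. Comparing the largest length, which is at least $D^{-(2^i3^j+2)}$, with the admissible upper bound $D^{-N}$ forces $N\le 2^i3^j+2$; comparing the smallest length, which is at most $D^{-2^i3^j}$, with the admissible lower bound $D^{-(N+1)}$ forces $N\ge 2^i3^j-1$. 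As $N(f(\sigma_i,P_j,k))$ is itself a representing level, it lies in $[2^i3^j-1,2^i3^j+2]$, using $j\ne 0$, $k\ne 0$ only to ensure $f(\sigma_i,P_j,k)$ is nonempty.

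Part (iv) is where the nestedness of the grid does the work. Assuming $N(P)<N(P_j)$, the remark following the definition of $\mathcal{F}$ guarantees that each component $(\alpha_\ell,\beta_\ell)$ of $P_j$ is either contained in a component of $P$ or disjoint from $P$. Letting $S$ be the set of indices of the first type, one has $P\cap P_j=\bigcup_{\ell\in S}(\alpha_\ell,\beta_\ell)$ up to a null set, so $\lambda(P\cap P_j)=\sum_{\ell\in S}(\beta_\ell-\alpha_\ell)$. Since by (i) the portion of $f(\sigma_i,P_j,k)$ arising from index $\ell$ is contained in $(\alpha_\ell,\beta_\ell)$, that portion meets $P$ fully when $\ell\in S$ and not at all otherwise, whence $P\cap f(\sigma_i,P_j,k)=\bigcup_{\ell\in S}\big(f(\sigma_i,P_j,k)\cap(\alpha_\ell,\beta_\ell)\big)$. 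Applying the componentwise measure computation from (iii) to precisely the indices in $S$ gives $\lambda\big(P\cap f(\sigma_i,P_j,k)\big)=\frac{k}{D}\sum_{\ell\in S}(\beta_\ell-\alpha_\ell)=\frac{k}{D}\lambda(P\cap P_j)$.

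I expect the main obstacle to be bookkeeping rather than any single deep step: correctly collapsing the defining formula via $\theta(i,j)+N(P_j)=2^i3^j$, and verifying that the generating intervals genuinely are the maximal subintervals, with no merging across $m$ or $\ell$, so that both the length–scale argument of (ii) and the componentwise decomposition of (iv) are legitimate. The nestedness hypothesis $N(P)<N(P_j)$ in (iv) must be invoked with care, since it is exactly what excludes partial overlaps between $P$ and the fine intervals of $f(\sigma_i,P_j,k)$ and thereby licenses treating the intersection component by component.
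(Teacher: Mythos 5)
Your proposal is correct and follows essentially the same route as the paper's proof: (i) and (iii) by direct computation from the definition of $f$, (ii) by bounding the lengths of the maximal subintervals between $D^{-(2^i3^j+2)}$ and $D^{-2^i3^j}$ and comparing with the admissible lengths at a representing level, and (iv) by invoking the nestedness remark to reduce to the componentwise computation of (iii). The only difference is that you carry out more explicit bookkeeping (endpoint formulas, the $k=D$ case) than the paper, which treats these points as immediate from the definition.
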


\begin{proof}
(i):   This is obvious from the definition of $f$. \newline
(ii):  Every maximal subinterval of $P_j$ has length less than or equal to $1/D^{N\left(P_j\right)}$, so by the definition of $f$, every maximal subinterval of $f(\sigma_i,P_j,k)$ has length less than or equal to 
\[\frac{k}{D^{\theta(i,j)+1}}\cdot\frac{1}{D^{N\left(P_j\right)}}
= \frac{k}{D^{2^i3^j +1}}
\leq\frac{1}{D^{2^i3^j }}.\]  This means $N(f(\sigma_i,P_j,k))\geq 2^i3^j-1$.  Similarly, every (nontrivial) maximal subinterval of $P_j$ has length greater than or equal to $1/D^{N\left(P_j\right)+1}$, and so every (nontrivial) maximal subinterval of $f(\sigma_i,P_j,k)$ has length greater than or equal to \[\frac{k}{D^{\theta(i,j)+1}}\cdot \frac{1}{D^{N\left(P_j\right)+1}}=\frac{k}{D^{2^i3^j+2}}\geq \frac{1}{D^{2^i3^j+2}}.\]  Therefore $N(f(\sigma_i,P_j,k))\leq 2^i3^j+2$. \newline
(iii): Supposing $P_j=\bigsqcup_{\ell=0}^n(\alpha_\ell,\beta_\ell)$, then 
\[\lambda(f(\sigma_i,P_j,k))=\sum_{\ell=0}^n\sum_{m=0}^{D^{\theta(i,j)}-1}\frac{k(\beta_\ell-\alpha_\ell)}{D^{\theta(i,j)+1}}
=\frac{k}{D}\sum_{\ell=0}^n (\beta_\ell-\alpha_\ell)
=\frac{k}{D}\lambda(P_j).\]

(iv):  Since $N(P)< N(P_j)$, every subinterval of $P_j$ is either contained in a subinterval of $P$ or has empty intersection with $P$.
Thus, if $P_j=\bigsqcup_{\ell=0}^n(\alpha_\ell,\beta_\ell)$ and $I=\{\ell\in [n]\ |\ (\alpha_\ell,\beta_\ell)\subseteq P\}$, then 
\[\lambda(P\cap f(\sigma_i,P_j,k))=\sum_{\ell\in I}\sum_{m=0}^{D^{\theta(i,j)}-1}\frac{k(\beta_\ell-\alpha_\ell)}{D^{\theta(i,j)+1}}
=\frac{k}{D}\sum_{\ell\in I} (\beta_\ell-\alpha_\ell)
=\frac{k}{D}\lambda(P\cap P_j). \qedhere\]
\end{proof}

We are now ready to define the sets needed for our bi-Lipschitz embedding.  This is done by recursively defining the sets based on the depths of the vertices in our bundle graph.  At any given depth we'll construct the sets out of subsets of the sets that were defined for the previous depth.  If a vertex has height $r$ and depth $0$, we'll assign the set $[0,r]$ to this vertex.  Suppose there are vertices at heights $r\leq s$ with depth $0$ and $v$ is a vertex with depth $1$ at height  halfway between $r$ and $s$.  We'll assign a set of measure $r+(s-r)/2$ to $v$ by including $[0,r]$ with half of the set $[0,s]\setminus [0,r]$.  So for instance, we might assign the set $[0,r]\cup [r,(s-r)/2]$ to $v$.  However, there will be another vertex with depth $1$ at the same height as $v$.  For this vertex, we need to assign a different set of measure $r+(s-r)/2$, so we'll take half of the set $[0,s]\setminus [0,r]$ in a way that is independent of the way we did it with $v$.  For instance, we might use $[0,r]\cup [r,(s-r)/4]\cup [(s-r)/2,3(s-r)/4]$.  A similar process for all depths is used until every vertex has a subset of $[0,M+1]$ assigned to it with measure equal to its height.  We'll use the function $f$ defined above to take care of the independent selection of sets.  At this point we'll fix for the rest of the section a countable cardinality $\kappa$.  The formal construction follows.

Given $v=(r,A)\in V(T_{W,\kappa})$, define the sets $S_x(v,i)$ and $S_y(v,i)$ in $\mathcal{F}$ for $i\in [w_r]$ recursively by
\begin{align*}
S_x(v,0)&=[0,x(r,1)]\setminus [x(r,1)],\\
S_y(v,0)&=[0,y(r,1)]\setminus [y(r,1)],\\
\end{align*}
and
\begin{align*}
S_x(v,i)&=f\left(A\restriction_i,S_y(v,i-1)\setminus \mathrm{clos}(S_x(v,i-1)),\frac{x(r,i+1)-x(r,i)}{y(r,i)-x(r,i)}D\right),\\
S_y(v,i)&=f\left(A\restriction_i,S_y(v,i-1)\setminus \mathrm{clos}(S_x(v,i-1)),\frac{y(r,i+1)-x(r,i)}{y(r,i)-x(r,i)}D\right),
\end{align*}
for $i\in[w_r]\setminus\{0\}$.  Finally, let $S(v)=\mathrm{clos}\left(\bigcup_{i=0}^{w_r}S_x(v,i)\right)$.

\begin{lemma}
\label{differences}
Fix $v=(r,A)\in V(T_{W,\kappa})$.  The following hold for all $i\in [w_r]$.
\begin{enumerate}[(i)]
\item $\lambda (S_x(v,i))=x(r,i+1)-x(r,i)$.
\item $\lambda (S_y(v,i))=y(r,i+1)-x(r,i)$.
\item $\lambda (S_y(v,i)\setminus S_x(v,i))= y(r,i+1)-x(r,i+1)$.
\item $S_x(v,i)\cap S_x(v,i')=\emptyset$ if $i'\in [w_r]$ is such that $i'\neq i$.
\item $\lambda (\cup_{k=0}^i S_x(v,k))=x(r,i+1)$.
\end{enumerate}
\end{lemma}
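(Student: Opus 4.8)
The plan is to prove all five statements simultaneously by induction on $i\in[w_r]$, with Lemma~\ref{set theory} as the engine. For the base case $i=0$, statements (i) and (ii) are immediate from the definitions $S_x(v,0)=[0,x(r,1)]\setminus[x(r,1)]$ and $S_y(v,0)=[0,y(r,1)]\setminus[y(r,1)]$ together with $x(r,0)=0$; since $x(r,1)\le r\le y(r,1)$ we have $S_x(v,0)\subseteq S_y(v,0)$, so (iii) reads $\lambda(S_y(v,0)\setminus S_x(v,0))=y(r,1)-x(r,1)$, while (iv) is vacuous and (v) coincides with (i).

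For the inductive step I would fix $i\in[w_r]\setminus\{0\}$ and set $Q=S_y(v,i-1)\setminus\mathrm{clos}(S_x(v,i-1))$, the common set to which $f$ is applied in both defining formulas (same first two arguments, differing only in the $k$-parameter). The crucial first move is to evaluate $\lambda(Q)$. Using the induction hypothesis (statements (i),(ii) at level $i-1$), the containment $S_x(v,i-1)\subseteq S_y(v,i-1)$ — which follows from Lemma~\ref{set theory}(i) since the defining ratio for $S_x$ is no larger than that for $S_y$ — and the fact that passing to the closure of a set in $\mathcal F$ only adds finitely many endpoints (hence measure zero), I obtain $\lambda(Q)=\lambda(S_y(v,i-1))-\lambda(S_x(v,i-1))=[y(r,i)-x(r,i-1)]-[x(r,i)-x(r,i-1)]=y(r,i)-x(r,i)$. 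This is exactly the denominator appearing in both ratios, so Lemma~\ref{set theory}(iii) yields (i) and (ii) by direct cancellation, e.g.\ $\lambda(S_x(v,i))=\frac{x(r,i+1)-x(r,i)}{y(r,i)-x(r,i)}\cdot\lambda(Q)=x(r,i+1)-x(r,i)$. Statement (iii) then follows by subtraction, once $S_x(v,i)\subseteq S_y(v,i)$ is noted (again Lemma~\ref{set theory}(i), since $x(r,i+1)\le y(r,i+1)$).

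The part requiring genuine bookkeeping is the disjointness (iv), and this is where I expect the main obstacle. I would first record the nesting produced by the recursion: Lemma~\ref{set theory}(i) gives $S_x(v,i),S_y(v,i)\subseteq Q\subseteq S_y(v,i-1)$, and since $Q$ is disjoint from $\mathrm{clos}(S_x(v,i-1))$ it is in particular disjoint from $S_x(v,i-1)$. Thus the sets $S_y(v,0)\supseteq S_y(v,1)\supseteq\cdots$ are nested, and each $S_x(v,j)$ lies in $S_y(v,j-1)$ yet misses $S_x(v,j-1)$. Given $i'<i$: if $i'=i-1$, then $S_x(v,i)\subseteq Q$ is already disjoint from $S_x(v,i-1)$; if $i'<i-1$, then $S_x(v,i)\subseteq S_y(v,i-1)\subseteq S_y(v,i'+1)$, and $S_y(v,i'+1)$ is disjoint from $S_x(v,i')$, so $S_x(v,i)\cap S_x(v,i')=\emptyset$; the symmetric case $i'>i$ is identical. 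Finally (v) combines (iv) with (i): the union $\bigcup_{k=0}^i S_x(v,k)$ is disjoint, so its measure telescopes to $\sum_{k=0}^i[x(r,k+1)-x(r,k)]=x(r,i+1)-x(r,0)=x(r,i+1)$. In passing I would also verify that each ratio times $D$ is an integer in $[D]$, so that $f$ is well-defined: this holds because $y(r,i)-x(r,i)\le M+1$ divides $D$, while $x(r,i)\le x(r,i+1)\le y(r,i)$ and $x(r,i)\le y(r,i+1)\le y(r,i)$ keep both numerators in the required range.
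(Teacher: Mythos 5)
Your proof is correct and follows essentially the same route as the paper: a simultaneous induction for (i)--(iii) driven by Lemma~\ref{set theory}(i) and (iii), the nesting/disjointness chain $S_x(v,i)\subseteq S_y(v,i-1)\setminus\mathrm{clos}(S_x(v,i-1))\subseteq S_y(v,i-2)\setminus\mathrm{clos}(S_x(v,i-2))\subseteq\cdots$ for (iv), and telescoping for (v). You simply make explicit the details the paper leaves to the reader, in particular the key cancellation $\lambda(Q)=y(r,i)-x(r,i)$ against the denominator of the $k$-parameter and the integrality check on $k$.
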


\begin{proof}
(i)-(iii):  These statements certainly hold true for $i=0$.  And by (simultaneous) induction and Lemma \ref{set theory} (i) and (iii), they hold true for all $i\in [w_r]$. \newline
(iv):  By Lemma \ref{set theory} (i), $S_x(v,w_r-k)\subseteq S_y(v,w_r-k-1)\setminus S_x(v,w_r-k-1)$, and so $S_x(v,w_r-k)\cap S_x(v,w_r-k-1)=\emptyset$ for all $k\in [w_r-1]$.  In the same way, $S_y(v,w_r-k-1)\subseteq S_y(v,w_r-k-2)\setminus S_x(v,w_r-k-2)$, and so the first set inclusion implies $S_x(v,w_r-k)\cap S_x(v,w_r-k-2)=\emptyset$ for all $k\in [w_r-2]$.  Inductively, $S_x(v,w_r-k_1)\cap S_x(v,w_r-k_2)=\emptyset$ for all $k_1<k_2\in [w_r]$.\newline
(v):  This follows from parts (i) and (iv).
\end{proof}

\begin{lemma}
\label{independence}
Fix $u=(r,A)$ and $v=(s,B)$ in $V(T_{W,\kappa})$.  Then
\[\lambda (S(u) \cap S(v))=
\begin{cases}
\min\{r,s\} & u\Updownarrow v\\
n(u,v)+\frac{(r-n(u,v))(s-n(u,v))}{m(u,v)-n(u,v)} & u\ \cancel{\Updownarrow}\ v
\end{cases}\]
\end{lemma}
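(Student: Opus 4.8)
The plan is to reduce everything to the (modulo-null) disjoint decompositions of $S(u)$ and $S(v)$ into $\bigsqcup_{i=0}^{w_r}S_x(u,i)$ and $\bigsqcup_{j=0}^{w_s}S_x(v,j)$, which are legitimate by Lemma \ref{differences}(iv) since taking closures changes measure by zero, so that $\lambda(S(u)\cap S(v))=\sum_{i,j}\lambda(S_x(u,i)\cap S_x(v,j))$. I assume throughout that $r\le s$. The two governing quantities are $p=|A\wedge B|$ and $q=\min\{w_t : r\le t\le s\}$; directly from the definition of $\Updownarrow$ one has $u\Updownarrow v$ if and only if $q\le p$. The first ingredient is a \emph{bracket-agreement} observation read straight off the definitions of $x$ and $y$: if no height in a range has depth below $i$, the corresponding brackets coincide, so that $x(r,i)=x(s,i)$ and $y(r,i)=y(s,i)$ for every $i\le q$.

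From bracket agreement I would show by induction on the level $i$ that the two constructions \emph{literally coincide} for $0\le i\le\min\{p,q-1\}$: at each such level the input regions $S_y(\cdot,i-1)\setminus\mathrm{clos}(S_x(\cdot,i-1))$ agree (inductive hypothesis), the labels $A\restriction_i=B\restriction_i$ agree (as $i\le p$), and the success ratios defining $S_x$ and $S_y$ agree (as the brackets at levels $i$ and $i+1$ agree, using $i+1\le q$). Hence $S_x(u,i)=S_x(v,i)$ and $S_y(u,i)=S_y(v,i)$ there; in particular the leftover region at the divergence level is shared, and the analysis localizes to the single common region where the two constructions first part ways.

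In the case $u\Updownarrow v$ (so $q\le p$ and $\min\{p,q-1\}=q-1$) the constructions agree through level $q-1$, leaving the common region $R=S_y(u,q-1)\setminus\mathrm{clos}(S_x(u,q-1))$ spanning heights $[x(r,q),y(r,q)]$, and it suffices to show $S(u)\subseteq S(v)$ modulo null sets, whence $\lambda(S(u)\cap S(v))=\lambda(S(u))=r=\min\{r,s\}$. Here the labels at level $q$ still agree (as $q\le p$), so $u$ and $v$ refine $R$ with the \emph{same} Bernoulli and the sets $f(A\restriction_q,R,\cdot)$ are totally ordered by inclusion in the success parameter by Lemma \ref{set theory}(i). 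Choosing $t^*\in[r,s]$ with $w_{t^*}=q$ and unwinding the definitions gives $y(r,q+1)\le t^*\le x(s,q+1)$, hence the success ratio defining $S_y(u,q)$ is at most the one defining $S_x(v,q)$ (the denominators agree by bracket agreement at level $q$); monotonicity then yields $S(u)\cap R\subseteq S_y(u,q)\subseteq S_x(v,q)\subseteq S(v)$, which is the desired containment.

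The case $u\ \cancel{\Updownarrow}\ v$ (so $q>p$ and $\min\{p,q-1\}=p$) is where the real work lies. The constructions coincide through level $p$, the shared levels $0,\dots,p$ contribute exactly $n(u,v)$ to the intersection, and the common leftover region $R'=S_y(u,p)\setminus\mathrm{clos}(S_x(u,p))$ has measure $m(u,v)-n(u,v)$. Inside $R'$ every $u$-label extends $A\restriction_{p+1}$ and every $v$-label extends $B\restriction_{p+1}$, and these differ, so \emph{every} pair $S_x(u,i),S_x(v,j)$ with $i,j>p$ is built from distinct labels. The target is the independence identity $\lambda(S_x(u,i)\cap S_x(v,j))=\lambda(S_x(u,i))\,\lambda(S_x(v,j))/\lambda(R')$; summing over $i,j>p$ via Lemma \ref{differences} then produces $\tfrac{(r-n)(s-n)}{m-n}$, and adding the shared $n(u,v)$ finishes the computation. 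I expect \textbf{this independence identity to be the main obstacle}, and it is precisely where the exponent $\theta(i,j)=2^i3^j-N(P_j)$ earns its keep, since it forces all the scales $N(\cdot)$ occurring in the two constructions to be distinct and comparable. I would establish it by repeated use of Lemma \ref{set theory}(iv): order all the $f$-operations building $S_x(u,i)$ and $S_x(v,j)$ by their scale parameter, peel off the finest operation against the coarser set via (iv) (whose hypothesis $N(P)<N(P_j)$ is supplied by the bounds of Lemma \ref{set theory}(ii)), and recurse down to $R'$ itself, where the normalization $\lambda(R'\cap R')/\lambda(R')$ appears. Verifying the scale inequalities at each peeling step is the delicate point.
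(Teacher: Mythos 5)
Your proposal is correct and follows essentially the same route as the paper: decompose $S(u)\cap S(v)$ via the disjoint union from Lemma \ref{differences}(iv), show by induction that the two constructions coincide up to the divergence level, obtain $S(u)\subseteq S(v)$ (mod null) in the $\Updownarrow$ case, and in the $\cancel{\Updownarrow}$ case prove the independence identity $\lambda(S_x(u,i)\cap S_x(v,j))=\lambda(S_x(u,i))\lambda(S_x(v,j))/(m(u,v)-n(u,v))$ by iterating Lemma \ref{set theory}(iv) with the scale separation from Lemma \ref{set theory}(ii), then telescope. The only cosmetic difference is your indexing by $q=\min\{w_t: r\le t\le s\}$ where the paper picks the analogous crossover level $n$ directly from the brackets.
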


\begin{proof}
Let $K=|A\wedge B|$.  Suppose first that $u\Updownarrow v$ and $r\leq s$.  Then $y(r,K+1)\leq x(s,K+1)$.  Let $n\in [K]$ be such that $x(s,i)< y(r,i)$ (which implies $x(s,i)=x(r,i)$ and $y(s,i)=y(r,i)$) for all $i\in [n]$ while $y(r,n+1)\leq x(s,n+1)$.  An easy induction argument shows that $S_x(u,i)=S_x(v,i)$ and $S_y(u,i)=S_y(v,i)$ for all $i\in [n-1]$, and $S_y(u,n)\subseteq S_x(v,n)$.  Another easy induction argument and Lemma \ref{set theory} (i) shows $S_x(u,i)\subseteq \bigcup_{j=0}^nS_x(v,j)$ for all $i\in [w_r]$ by , and so Lemma \ref{differences} (v) yields
\[
\lambda (S(u)\cap S(v))=\lambda \left(\bigcup_{i=0}^{w_r}S_x(u,i)\cap \bigcup_{j=0}^{w_s}S_x(v,j)\right)
=\lambda\left(\bigcup_{i=0}^{w_r}S_x(u,i)\right)
=x(r,w_r+1)
=r.
\]

Suppose now that $u\ \cancel{\Updownarrow}\ v$.  Then $x(r,i)=x(s,i)$ and $y(r,i)=y(s,i)$ for all $i\in [K+1]$, and so $S_x(u,i)=S_x(v,i)$ and $S_y(u,i)=S_y(v,i)$ for all $i\in[K]$, by an easy induction argument.  Note that $x(r,K+1)=x(s,K+1)=n(u,v)$ and $y(r,K+1)=y(s,K+1)=m(u,v)$.  For any $i\in [w_r]\setminus [K]$ and $j\in [w_s]\setminus [K]$, repeated applications of Lemma \ref{set theory} (ii) and (iv) and then Lemma \ref{differences} (iii) show
\begin{align*}
\lambda (S_x(u,i)&\cap S_x(v,j))\\
&= \frac{x(r,i+1)-x(r,i)}{y(r,K+1)-x(r,K+1)}\cdot \frac{x(s,j+1)-x(s,j)}{y(s,K+1)-x(s,K+1)}\cdot \lambda (S_y(u,K)\setminus S_x(u,K))\\
&=\frac{(x(r,i+1)-x(r,i))(x(s,j+1)-x(s,j))}{m(u,v)-n(u,v)}.
\end{align*}
This with Lemma \ref{differences} (iv) and (v) implies
\begin{align*}
\lambda (S(u)\cap S(v))&=\lambda \left(\bigcup_{i=0}^{w_r}S_x(u,i)\cap \bigcup_{j=0}^{w_s}S_x(v,j)\right)\\
&=x(r,K+1)+\sum_{i=K+1}^{w_r}\sum_{j=K+1}^{w_s}\frac{(x(r,i+1)-x(r,i))(x(s,j+1)-x(s,j))}{m(u,v)-n(u,v)}\\
&=x(r,K+1)+\frac{(x(r,w_r+1)-x(r,K+1))(y(s,w_s+1)-y(s,K+1))}{m(u,v)-n(u,v)}\\
&=n(u,v)+\frac{(r-n(u,v))(s-n(u,v))}{m(u,v)-n(u,v)}.\qedhere
\end{align*}

\end{proof}

\begin{theorem}
There is a bi-Lipschitz embedding $\psi\colon T_{W,\kappa}\to L_1$ such that for all $u,v\in V(T_{W,\kappa})$,
\[\frac{1}{2}d(u,v)\leq \|\psi(u)-\psi(v)\|_{L_1}\leq d(u,v),\]
where $d$ is the shortest-path metric for $T_{W,\kappa}$, and furthermore $\|\psi(u)-\psi(v)\|_{L_1}=d(u,v)$ when $u\Updownarrow v$.
\end{theorem}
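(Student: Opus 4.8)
The plan is to take $\psi$ to be the map sending each vertex $v=(r,A)$ to the characteristic function $\mathbf{1}_{S(v)}\in L_1([0,M+1])$, and to reduce everything to the measure computation already carried out in Lemma \ref{independence}. The starting point is the standard identity for characteristic functions in $L_1$: for any measurable $S,S'$ one has $\|\mathbf{1}_S-\mathbf{1}_{S'}\|_{L_1}=\lambda(S\triangle S')=\lambda(S)+\lambda(S')-2\lambda(S\cap S')$. First I would record that $\lambda(S(v))=r$ for every $v=(r,A)$: since the union $\bigcup_{i=0}^{w_r}S_x(v,i)$ differs from $S(v)$ by a null set, Lemma \ref{differences}(v) with $i=w_r$ gives $\lambda(S(v))=x(r,w_r+1)=r$ (as the vertices at height $r$ have depth $w_r$, so $x(r,w_r+1)=r$). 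Hence $\|\psi(u)-\psi(v)\|_{L_1}=r+s-2\lambda(S(u)\cap S(v))$, and it remains only to substitute the two cases of Lemma \ref{independence}.

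The case $u\Updownarrow v$ is immediate and yields the ``furthermore'' claim: there $\lambda(S(u)\cap S(v))=\min\{r,s\}$, so $\|\psi(u)-\psi(v)\|_{L_1}=r+s-2\min\{r,s\}=|r-s|$, which equals $d(u,v)$ by Proposition \ref{dist}. In particular both the lower bound $\tfrac12 d(u,v)$ and the upper bound $d(u,v)$ hold, the former with room to spare.

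For the case $u\ \cancel{\Updownarrow}\ v$, abbreviate $n=n(u,v)$, $m=m(u,v)$ and set $a=r-n$, $b=s-n$, $L=m-n$. Since $n<\min\{r,s\}\le\max\{r,s\}<m$ (were $n=\min\{r,s\}$ we would have $u\Updownarrow v$, and dually for $m$), we get $L>0$ and $a,b\in[0,L]$. Lemma \ref{independence} then gives $\|\psi(u)-\psi(v)\|_{L_1}=a+b-\tfrac{2ab}{L}$, while Proposition \ref{dist} gives $d(u,v)=\min\{a+b,\,2L-a-b\}$. The upper bound splits into $a+b-\tfrac{2ab}{L}\le a+b$ (clear) and $a+b-\tfrac{2ab}{L}\le 2L-a-b$, the latter being equivalent to $(L-a)(L-b)\ge0$, which holds because $a,b\le L$.

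The one genuinely delicate point is the lower bound $a+b-\tfrac{2ab}{L}\ge\tfrac12\min\{a+b,2L-a-b\}$, and the device I would use is the involution $a\mapsto L-a$, $b\mapsto L-b$. A direct substitution shows $a+b-\tfrac{2ab}{L}$ is invariant under this involution, which also interchanges $a+b$ and $2L-a-b$. Thus it suffices to treat the subcase $a+b\le L$, where the minimum is $a+b$: the desired inequality reduces to $L(a+b)\ge 4ab$, and since $L\ge a+b$ we get $L(a+b)\ge(a+b)^2\ge 4ab$, the last step being $(a-b)^2\ge0$; the mirror subcase $a+b\ge L$ follows by applying this to the involuted variables. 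Assembling both cases yields $\tfrac12 d(u,v)\le\|\psi(u)-\psi(v)\|_{L_1}\le d(u,v)$ for all $u,v$, which also shows $\psi$ is injective and hence the embedding. I expect the symmetry observation to be the crux: without it one checks the two branches of the minimum separately, which is more laborious though still elementary.
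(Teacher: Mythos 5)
Your proposal is correct and takes essentially the same approach as the paper: the same map $v\mapsto\chi_{S(v)}$, the same reduction via Lemma \ref{independence} to the quantity $\alpha+\beta-2\alpha\beta/\gamma$, and the same comparison with Proposition \ref{dist}. The only differences are organizational: the paper gets the upper bound by the triangle inequality along shortest paths and verifies the lower bound in three cases according to the position of $\gamma/2$ relative to $\alpha$ and $\beta$, whereas your involution $(\alpha,\beta)\mapsto(\gamma-\alpha,\gamma-\beta)$ collapses this to a single case --- a slightly cleaner finish to the same argument.
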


\begin{proof}

Define the map $\psi\colon T_{W,\kappa}\to L_1$ by $\psi(v)=\chi_{S(v)}$ for every $v\in V(T_{W,\kappa})$.

Take any $u=(r,A)$ and $v=(s,B)$ in $V(T_{W,\kappa})$ and suppose first that $u\Updownarrow v$.
By Lemma \ref{differences} (v) and Lemma \ref{independence},
\[\|\psi(u)-\psi(v)\|_{L_1}=\lambda (S(u))+\lambda(S(v))-2\lambda (S(u)\cap S(v))
=r+s-2\min\{r,s\}
=|r-s|
=d(u,v).
\]
Lemma \ref{path} and the triangle inequality applied to shortest paths then shows that $\|\psi(u)-\psi(v)\|_{L_1}\leq d(u,v)$ for all $u,v\in V(T_{W,\kappa})$.

Suppose now that $u\ \cancel{\Updownarrow}\ v$.  By Lemma \ref{differences} (v) and Lemma \ref{independence},
\begin{align*}
\|\psi(u)-\psi(v)\|_{L_1} &= \lambda(S(u))+\lambda(S(v))-2\lambda (S(u)\cap S(v))\\
&= r+s-2n(u,v)-2\frac{(r-n(u,v))(s-n(u,v))}{m(u,v)-n(u,v)}\\
&=\alpha +\beta -2\frac{\alpha \beta}{\gamma},
\end{align*}
where $\alpha=r-n(u,v)$, $\beta=s-n(u,v)$, and $\gamma=m(u,v)-n(u,v)$.
Suppose first that $\max\{\alpha,\beta\}\leq \frac{1}{2}\gamma$.
Then by Proposition \ref{dist} and the above,
\[
\|\psi(u)-\psi(v)\|_{L_1}\geq \alpha +\beta-\min\{\alpha,\beta\}\\
=\max\{\alpha, \beta\}\\
\geq \frac{1}{2}d(u,v).
\]

Suppose next that $\min\{\alpha,\beta\}\leq \frac{1}{2}\gamma\leq\max\{\alpha,\beta\}$.
Then by Proposition \ref{dist} and the above,
\[
\|\psi(u)-\psi(v)\|_{L_1} =
\min\{\alpha,\beta\}+\max\{\alpha,\beta\}\left(1-2\frac{\min\{\alpha,\beta\}}{\gamma}\right)\\
\geq  \frac{1}{2}\gamma \\
 \geq \frac{1}{2}d(u,v).
\]

Finally, suppose $\frac{1}{2}\gamma\leq \min\{\alpha, \beta\}$.
Then by Proposition \ref{dist} and the above,
\begin{align*}
\|\psi(u)-\psi(v)\|_{L_1}
&=\frac{1}{2\gamma}(2\alpha \gamma +2\beta \gamma -4\alpha \beta)\\
& =\frac{1}{2\gamma}(\gamma^2-(2\alpha-\gamma)(2\beta-\gamma))\\
&\geq \frac{1}{2\gamma}(\gamma^2-(\alpha+\beta-\gamma)\gamma)\\
& =\frac{1}{2}(2\gamma-(\alpha+\beta)) \\
&=\frac{1}{2}d(u,v). \qedhere
\end{align*}

\end{proof}

\section{Embedding into Banach spaces with ESA bases}
\label{sec:superreflexivity}
In this section we show that for any finite cardinality $\kappa$, $T_{W,\kappa}$ is bi-Lipschitzly embeddable into any Banach space with an ESA basis with distortion bounded above by a constant depending only on $W$.  

\begin{definition}
\label{def:esa}
Let $(X,\|\cdot\|_X)$ be a Banach space.
\begin{enumerate}[(i)]
\item A sequence $(e_n)_{n=1}^\infty\subseteq X$ is said to be \emph{equal-signs-additive (ESA)} if for all $(a)_{n=1}^\infty\in c_{00}$ and $k\in \mathbb{N}$ such that $a_ka_{k+1}\geq 0$,
\[\left\|\sum_{n=1}^{k-1}a_ne_n+(a_k+a_{k+1})e_k+\sum_{n=k+2}^\infty a_ne_n\right\|_X=\left\|\sum_{n=1}^\infty a_ne_n\right\|_X.\]
\item A sequence $(e_n)_{n=1}^\infty\subseteq X$ is said to be \emph{subadditive (SA)} if for all $(a)_{n=1}^\infty\in c_{00}$,
\[\left\|\sum_{n=1}^{k-1}a_ne_n+(a_k+a_{k+1})e_k+\sum_{n=k+2}^\infty a_ne_n\right\|_X\leq\left\|\sum_{n=1}^\infty a_ne_n\right\|_X.\]
\item A sequence $(e_n)_{n=1}^\infty\subseteq X$ is said to be \emph{invariant under spreading (IS)} if for all $(a)_{n=1}^\infty\in c_{00}$ and increasing sequences $(k_n)_{n=1}^\infty\subseteq \mathbb{N}$,
\[\left\|\sum_{n=1}^\infty a_ne_{k_n}\right\|=\left\|\sum_{n=1}^\infty a_ne_n\right\|.\]
\end{enumerate} 
\end{definition}

In \cite{brunel_sucheston}, A. Brunel and L. Sucheston show that a sequence is ESA if and only if it is SA, and that every ESA sequence is also an IS basis for its linear span.  We will use these facts without mention.  More information about ESA sequences can be found in \cite{brunel_sucheston} and \cite{argyros_motakis_sari}.

To construct the embedding, we follow roughly the same procedure as used in the previous section.  However, with $L_1$ we were able to subdivide the interval $[0,M+1]$ as finely as needed to accommodate the bundle graph.  That is, we could use the existence of infinitely many independent Bernoulli random variables.  If instead of $L_1$, we try to embed into a general Banach space with a basis, we still need independent Bernoulli random variables to choose the support of an embedded vertex, but the random variables are now discrete.  In other words, the more vertices in our graph we have to embed, the further down the basis we have to go if we want to mimic the procedure used for $L_1$.  This and the fact that we don't have an explicitly defined norm anymore make the argument more subtle.

We fix now for the rest of this section a finite cardinality $\kappa$ and let $\mu=\left|\kappa^{\leq \max W}\right|\in \mathbb{N}$.  We also fix an independent collection $\{Y_i\}_{i=1}^\mu$ of Bernoulli random variables defined on $[2^\mu]\setminus \{0\}$ (equipped with the uniform probability measure) with probability of success equal to $1/2$.
Concretely, for each $i\in [\mu]\setminus\{0\}$, we may define $Y_i\colon \left[2^{\mu}\right]\setminus\{0\} \to \{0,1\}$ by 
\[Y_i(j)=\begin{cases}
1 & j\equiv n \pmod{2^{\mu-(i-1)}} \mbox{ for some } n\in [2^{\mu-i}]\setminus\{0\}\\
0 & \mbox{otherwise} 
\end{cases}\] 
for all $j\in \left[2^\mu\right]\setminus \{0\}$.   
For each $j\in\mathbb{N}$, let $I_j=[j(M+1)]\setminus [(j-1)(M+1)]$, and let $\mathcal{F}_j$ be the family of subsets of $I_j$ such that for each $P\in \mathcal{F}_j$, either $P=\emptyset$, or $P\neq \emptyset$ and $|P|=\max (P)-\min (P)+1$ (which implies $P$ has no ``gaps'').  That is, we break up the natural number line into blocks of size $M+1$ and let $\mathcal{F}_j$ be the family of intervals contained in the the $j$-th block.

Let $\{\sigma_i\}_{i=0}^\mu$ be an enumeration of $\kappa^{\leq \max W}$.  For $A\in \kappa^{\leq \max W}$ and $i\in [\mu]$, we'll let $Y_A=Y_i$ if $A=\sigma_i$.  Define for each $j\in [2^\mu]\setminus \{0\}$ the function $f_j\colon \kappa^{\leq \max W}\times \mathcal{F}_j\times [M] \to \mathcal{F}_j$ by  
\[f_j(\sigma_i,P,k)=I_j\cap \{Y_i(j)(\inf P+\ell), (1-Y_i(j))(\sup P-\ell)\}_{\ell=0}^{k-1}\]
for all $i\in [\mu]$, $P\in\mathcal{F}_j$, and $k\in [M]$.  

To summarize what is happening, we assign independent Bernoulli random variables to the elements of $\kappa^{\leq \max W}$.  Given $A\in \kappa^{\leq \max W}$ with its assigned random variable $Y_A$ and an interval $P$ in $I_j$, $f_j(A,P,k)$ will take the first $k$ elements of $P$ in $I_j$ if $Y_A(j)=1$ and the last $k$ elements if $Y_A(j)=0$.  Note that $f_j(\sigma_i,P,k)\subseteq P$ if $k\leq |P|$.  Thus, if $P$ is a union of subintervals of the $I_j's$, we can use the $f_j's$ to simultaneously select $k$ elements from $P$ out of each interval $P\cap I_j$, and these selections will be independent for different elements of $\kappa^{\leq \max W}$.  This is quite analogous to what happened in the last section.  The construction of the supports of our embedded vertices is likewise similar, but in the end we can't just map a vertex to a characteristic function.  We'll have to modify slightly in order to use the ESA property to obtain a good distortion.

Given $v=(r,A)\in V(T_{W,\kappa})$ and $j\in [2^\mu]\setminus \{0\}$, define the sets $S_{x,j}(v,i)$ and $S_{y,j}(v,i)$ in $\mathcal{F}_j$ for $i\in[w_r]$ recursively by   
\begin{align*}
S_{x,j}(v,0)&=[(j-1)(M+1)+x(r,1)]\setminus [(j-1)(M+1)],\\
S_{y,j}(v,0)&=[(j-1)(M+1)+y(r,1)]\setminus [(j-1)(M+1)],\\
\end{align*}
and
\begin{align*}
S_{x,j}(v,i)&=f_j\left(A\restriction_i,S_{y,j}(v,i-1)\setminus S_{x,j}(v,i-1),x(r,i+1)-x(r,i)\right),\\
S_{y,j}(v,i)&=f_j\left(A\restriction_i,S_{y,j}(v,i-1)\setminus S_{x,j}(v,i-1),y(r,i+1)-x(r,i)\right),
\end{align*}
for $i\in [w_r]\setminus \{0\}$;

and then define 

\[S_j(v)=\bigcup_{i=0}^{w_r}S_{x,j}(v,i).\]
Finally, let
\begin{align*}
S_{j,+}(v)&= \left\{(j-1)(M+1)+n\ |\ n\in S_j(v)\right\},\\
S_{j,-}(v)&= \left\{(3j-1)(M+1)-n\ |\ n\in S_j(v)\right\}.
\end{align*}

$S_{j,+}$ will take a copy of $S_j(V)$ and put it in $I_{2j-1}$.  $S_{j,-}$ will also take a copy of $S_j(V)$ and put it in $I_{2j}$.  The copy for $S_{j,-}$, however, is backwards.  That is, $S_{j,-}$ is a reflection of $S_{j,+}$ across the middle of $I_{2j-1}\cup I_{2j}$.  The purpose of this is to allow us to take advantage of the subadditivity of an ESA basis later.

\begin{lemma}
\label{difference2}
Fix $v=(r,A)\in V(T_{W,\kappa})$.  The following hold for all $i\in[w_r]$ and  $j\in [2^\mu]\setminus\{0\}$,
\begin{enumerate}[(i)]
\item $|S_{x,j}(v,i)|=x(r,i+1)-x(r,i)$.
\item $|S_{y,j}(v,i)|=y(r,i+1)-x(r,i)$.
\item $|S_{y,j}(v,i)\setminus S_{x,j}(v,i)|= y(r,i+1)-x(r,i+1)$.
\item $S_{x,j}(v,i)\cap S_{x,j}(v,i')=\emptyset$ if $i'\in [w_r]$ is such that $i'\neq i$.
\item $|\bigcup_{k=0}^i S_{x,j}(v,k)|=x(r,i+1)$.
\item $|S_{j,+}(v)|=|S_{j,-}(v)|=r$.
\item $S_{j,+}(v)\subseteq I_{2j-1}$ and $S_{j,-}(v)\subseteq I_{2j}$.
\item If $i'\in[w_r]\setminus [i]$ and $Y_{A\restriction_n}(j)=1$ for all $n\in [i']\setminus [i]$, then
\begin{enumerate}[(a)]
\item $\bigcup_{k=i+1}^{i'} S_{x,j}(v,k)=I_j\cap \{\inf S_{y,j}(v,i)\setminus S_{x,j}(v,i)+\ell\}_{\ell=0}^{x(r,i'+1)-x(r,i+1)-1}$.
\item $ S_{y,j}(v,i')\setminus S_{x,j}(v,i')=I_j\cap \{\inf S_{y,j}(v,i)\setminus S_{x,j}(v,i)+\ell\}_{\ell=x(r,i'+1)-x(r,i+1)}^{y(r,i'+1)-x(r,i+1)-1}$
\end{enumerate}
\item If $i'\in[w_r]\setminus [i]$ and $Y_{A\restriction_n}(j)=0$ for all $n\in [i']\setminus [i]$, then
\begin{enumerate}[(a)]
\item $\bigcup_{k=i+1}^{i'}S_{x,j}(v,k)=I_j\cap \{\sup S_{y,j}(v,i)\setminus S_{x,j}(v,i)-\ell\}_{\ell=0}^{x(r,i'+1)-x(r,i+1)-1}$.
\item $S_{y,j}(v,i')\setminus S_{x,j}(v,i')=I_j\cap \{\sup S_{y,j}(v,i)\setminus S_{x,j}(v,i)-\ell\}_{\ell=x(r,i'+1)-x(r,i+1)}^{y(r,i'+1)-x(r,i+1)-1}$.
\end{enumerate}
\end{enumerate}
\end{lemma}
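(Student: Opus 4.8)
The plan is to mirror the structure of Lemma~\ref{differences}, replacing the measure-theoretic facts about $f$ (Lemma~\ref{set theory}) with the combinatorial behavior of $f_j$. The single fact driving everything is that $f_j(A,P,k)$ selects $k$ consecutive elements of the interval $P\in\mathcal{F}_j$: the $k$ smallest when $Y_A(j)=1$ and the $k$ largest when $Y_A(j)=0$. Consequently, whenever $0\le k\le k'\le |P|$ one has $f_j(A,P,k)\subseteq f_j(A,P,k')\subseteq P$ and $|f_j(A,P,k)|=k$, and the two selections share the same endpoint (both grow from $\inf P$, or both from $\sup P$). I would record this as a preliminary observation, together with the monotonicity of the coefficients from Section~\ref{sec:notation}: for $i\ge 1$, $x(r,i)\le x(r,i+1)\le r\le y(r,i+1)\le y(r,i)$. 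These inequalities guarantee that at every recursive step the requested counts $x(r,i+1)-x(r,i)$ and $y(r,i+1)-x(r,i)$ are nonnegative and bounded above by $|S_{y,j}(v,i-1)\setminus S_{x,j}(v,i-1)|=y(r,i)-x(r,i)$, so that $f_j$ behaves as described and the construction is well defined.

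With this in hand, (i)--(iii) follow by simultaneous induction on $i$, exactly as in Lemma~\ref{differences}. The base case $i=0$ is immediate from the definitions of $S_{x,j}(v,0)$ and $S_{y,j}(v,0)$. For the inductive step, $S_{x,j}(v,i)$ and $S_{y,j}(v,i)$ are both $f_j$ applied to the same interval $P=S_{y,j}(v,i-1)\setminus S_{x,j}(v,i-1)$ (of size $y(r,i)-x(r,i)$ by (iii) at level $i-1$) with counts $x(r,i+1)-x(r,i)\le y(r,i+1)-x(r,i)$; this yields (i) and (ii) directly, and since the two selections grow from the same end, $S_{x,j}(v,i)\subseteq S_{y,j}(v,i)$ with complement of size $(y(r,i+1)-x(r,i))-(x(r,i+1)-x(r,i))=y(r,i+1)-x(r,i+1)$, which is (iii). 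Statement (iv) follows from the containment $S_{x,j}(v,i)\subseteq S_{y,j}(v,i-1)\setminus S_{x,j}(v,i-1)$ together with $S_{y,j}(v,i-1)\subseteq S_{y,j}(v,i-2)\setminus S_{x,j}(v,i-2)$, by the identical induction used for Lemma~\ref{differences}(iv); (v) is then the telescoping sum $\sum_{k=0}^i(x(r,k+1)-x(r,k))=x(r,i+1)$ over the disjoint pieces. For (vi), (v) with $i=w_r$ gives $|S_j(v)|=x(r,w_r+1)=r$ (the constraint $w_t\le w_r$ at $t=r\le r$ forces the maximum to be $r$), and the maps $n\mapsto (j-1)(M+1)+n$ and $n\mapsto(3j-1)(M+1)-n$ are injective, so the cardinalities are preserved; (vii) is the routine index computation showing these two maps carry $I_j$ into the blocks $I_{2j-1}$ and $I_{2j}$ respectively.

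The substantive part is (viii) and (ix), and I would prove (viii) by induction on $i'\ge i+1$, the case (ix) being symmetric with $\inf$ replaced by $\sup$. Write $P_k=S_{y,j}(v,k)\setminus S_{x,j}(v,k)$ for local convenience. When $Y_{A\restriction_{k}}(j)=1$, the selection $f_j(A\restriction_k,P_{k-1},\cdot)$ grows from $\inf P_{k-1}$, so $S_{x,j}(v,k)$ is the initial segment of $P_{k-1}$ of length $x(r,k+1)-x(r,k)$ and $P_k$ is the immediately following segment; in particular $\inf P_k=\inf P_{k-1}+(x(r,k+1)-x(r,k))$, which equals the position just past the end of $S_{x,j}(v,k)$, so the successive pieces $S_{x,j}(v,i+1),S_{x,j}(v,i+2),\dots$ stack up contiguously starting at $\inf P_i$. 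The base case $i'=i+1$ is read directly off the definition of $f_j$, and the inductive step merges the contiguous block $\bigcup_{k=i+1}^{i'-1}S_{x,j}(v,k)$ with $S_{x,j}(v,i')$; telescoping the lengths $\sum_{k=i+1}^{i'}(x(r,k+1)-x(r,k))=x(r,i'+1)-x(r,i+1)$ produces exactly the index range in (a), and the next segment $P_{i'}$ occupies the range in (b). The main obstacle is precisely this bookkeeping: one must keep straight how $\inf P_k$ advances by exactly $|S_{x,j}(v,k)|$ at each step (the contiguity of the merged segments) and how the telescoping of the $x$-increments lines up with the $\ell$-ranges declared in (a) and (b). Once contiguity is established the two formulas fall out of the telescoping, and the $\cap\,I_j$ in the statements merely discards the spurious $0$ coming from the defining formula for $f_j$.
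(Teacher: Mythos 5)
Your proposal is correct and follows essentially the same route as the paper's proof: simultaneous induction for (i)--(iii), the nested-containment argument for (iv), telescoping for (v), the block bijections for (vi)--(vii), and induction on $i'$ for (viii)--(ix). The paper's version is far terser (most steps are dispatched as ``by induction''), so your write-up mainly supplies the bookkeeping details the author leaves to the reader.
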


\begin{proof}
(i)-(iii):  These statements certainly hold true for $i=0$.  And by (simultaneous) induction, they hold true for all $i\in [w_r]$. \newline
(iv):  We have $S_{x,j}(v,w_r-k)\subseteq S_{y,j}(v,w_r-k-1)\setminus S_{x,j}(v,w_r-k-1)$, and so $S_{x,j}(v,w_r-k)\cap S_{x,j}(v,w_r-k-1)=\emptyset$ for all $k\in [w_r-1]$.  In the same way, $S_{y,j}(v,w_r-k-1)\subseteq S_{y,j}(v,w_r-k-2)\setminus S_{x,j}(v,w_r-k-2)$, and so the first set inclusion implies $S_{x,j}(v,w_r-k)\cap S_{x,j}(v,w_r-k-2)=\emptyset$ for all $k\in [w_r-2]$.  Inductively, $S_{x,j}(v,w_r-k_1)\cap S_{x,j}(v,w_r-k_2)=\emptyset$ for all $k_1<k_2\in [w_r]$.\newline
(v):  This follows from parts (i) and (iv).\newline
(vi)-(vii):  By definition, $S_j(v)\subseteq I_j$.  The rest follows from the definitions, part (v), and the fact that $n\mapsto (j-1)(M+1)+n$ is a bijection from $I_j$ to $I_{2j-1}$ and $n\mapsto (3j-1)(M+1)-n$ is a bijection from $I_j$ to $I_{2j}$ for each $j\in [2^\mu]\setminus\{0\}$.\newline
(viii)-(ix):  The statements are true for $i'=i+1$ by the definitions of $S_{x,j}(v,i+1)$ and $f_j$, and the statements hold for arbitrary $i'$ by a simple induction.
\end{proof}

At this point we're almost ready to define the embedding, but we need to be able to make sure that for enough $j\in[\mu]\setminus\{0\}$, the symmetric difference of $S_j(u)$ and $S_j(v)$ is large enough when $u\neq v\in V(T_{W,\kappa})$.  Unfortunately, the amount of $j\in[\mu]\setminus\{0\}$ we can do this for depends on $W$.  We define the parameter $p_W$ to be the minimum of all $p\in \mathbb{N}$ such that for all $r\in[M+1]$ and $i\in\mathbb{N}_0$,
\begin{itemize}
\item $x(r,i+p)\geq (x(r,i)+y(r,i))/2$ whenever $r\geq (x(r,i)+y(r,i))/2$.
\item $y(r,i+p)\leq (x(r,i)+y(r,i))/2$ whenever $r\leq (x(r,i)+y(r,i))/2$.
\end{itemize}
One may easily check that $p_W\leq \max W+1$.

\begin{lemma}
\label{lower_bound}
Fix $u=(r,A)$ and $v=(s,B)$ in $V(T_{W,\kappa})$ such that $u\ \cancel{\Updownarrow}\ v$ and $r\leq s$.  Let
\[ \mathcal{I}_{u,v}=\{ j\in [2^\mu]\setminus \{0\}\ |\ Y_{B\restriction_{|A\wedge B|+n}}(j)=1 \mbox{ and }Y_{A\restriction_{|A\wedge B|+n}}(j)=0 \mbox{ for all } n\in [p_W]\setminus \{0\}\}.\]
Then for each $j\in \mathcal{I}_{u,v}$, there is $\mathcal{L}_j\subseteq \bigcup_{k=|A\wedge B|+1}^{w_s}S_{x,j}(v,k)$ such that
\begin{enumerate}[(i)]
\item $|\mathcal{L}_j|\geq d(u,v)/2$, where $d$ is the shortest-path metric for $T_{W,\kappa}$.
\item $\max \mathcal{L}_j< \inf \bigcup_{k=|A\wedge B|+1}^{w_r}S_{x,j}(u,k)$.
\end{enumerate}
\end{lemma}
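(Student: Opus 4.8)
\textit{The plan.} I want to pin down, for a fixed $j\in\mathcal{I}_{u,v}$, exactly where the post-divergence parts of $S_j(u)$ and $S_j(v)$ live inside the one block that both refine, and then read off $\mathcal{L}_j$ from the front of $S_j(v)$. Write $K=|A\wedge B|$, $n=n(u,v)$, $m=m(u,v)$, $\gamma=m-n$, $P=(n+m)/2$, $\alpha=r-n$ and $\beta=s-n$. Exactly as in the proof of Lemma~\ref{independence}, the hypothesis $u\ \cancel{\Updownarrow}\ v$ forces $u$ and $v$ to share their construction through level $K$, so $S_{x,j}(u,i)=S_{x,j}(v,i)$ and $S_{y,j}(u,i)=S_{y,j}(v,i)$ for all $i\le K$; in particular they refine the common interval $R:=S_{y,j}(u,K)\setminus S_{x,j}(u,K)=S_{y,j}(v,K)\setminus S_{x,j}(v,K)\subseteq I_j$, which has $|R|=\gamma$ by Lemma~\ref{difference2}(iii), and every $S_{x,j}(u,k)$, $S_{x,j}(v,k)$ with $k>K$ lies inside $R$. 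I measure position by the offset of an element from $\inf R$, so that $R$ occupies offsets $0,\dots,\gamma-1$.

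\textit{Locating the two supports.} For $j\in\mathcal{I}_{u,v}$ the first $p_W$ post-divergence levels are sign-controlled. Since $v$'s controlling variables are $1$, Lemma~\ref{difference2}(viii) shows $\bigcup_{k=K+1}^{K+p_W}S_{x,j}(v,k)$ is the \emph{front}-block of $R$, beginning at offset $0$ and of size $x(s,K+p_W+1)-n$, with all deeper levels of $v$ sitting immediately above it. Since $u$'s controlling variables are $0$, Lemma~\ref{difference2}(ix) shows its first $p_W$ levels form the \emph{back}-block of $R$, and that the undecided region it leaves after level $K+p_W$ occupies offsets $[\,m-y(r,K+p_W+1),\,m-x(r,K+p_W+1)-1\,]$. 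As each later $S_{x,j}(u,k)$ is contained in this leftover region, \emph{every} element of $\bigcup_{k=K+1}^{w_r}S_{x,j}(u,k)$ has offset at least $m-y(r,K+p_W+1)$ — this is the only handle one gets on $u$'s uncontrolled deeper levels.

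\textit{The clean case $r\le P$.} This covers the ancestor-dominated regime and the straddling regime $r\le P\le s$. Here the second clause of the definition of $p_W$, evaluated at base index $i=K+1$ (where $x(\cdot,K+1)=n$, $y(\cdot,K+1)=m$, midpoint $P$), gives $y(r,K+p_W+1)\le P$, so \emph{all} of $u$'s post-divergence support — controlled and uncontrolled alike — is trapped at offsets $\ge m-P=\gamma/2$. Since $d(u,v)\le\gamma$ by Proposition~\ref{dist}, and the front-block of $v$ starts at offset $0$, taking $\mathcal{L}_j$ to be the part of $v$'s front-block at offsets below $\gamma/2$ gives $|\mathcal{L}_j|\ge d(u,v)/2$ with $\max\mathcal{L}_j$ strictly below $u$'s lowest element, as required; when moreover $s\le P$ one may take $\mathcal{L}_j$ to be all of $v$'s post-divergence support, of size $\beta=s-n\ge(\alpha+\beta)/2=d(u,v)/2$ (using $r\le s$).

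\textit{The main obstacle: $r> P$.} The genuine difficulty is the strictly descendant-dominated regime $n<P<r\le s$. Here the $y$-clause of the $p_W$ definition does \emph{not} apply to $r$, so nothing forces $y(r,K+p_W+1)<m$; when it equals $m$, the undecided region left to $u$ after its $p_W$ controlled levels reaches down to offset $0$, and a single uncontrolled deeper level of $u$ can deposit support at the very bottom of $R$, directly beneath $v$'s front-block, defeating the naive choice of $\mathcal{L}_j$ for condition (ii). Ruling this out is exactly what must be argued carefully, and it is presumably why the construction carries the reflected companion sets $S_{j,-}$: reversing the order of $I_j$ interchanges ``front'' and ``back'' and swaps the common-ancestor and common-descendant directions, converting the $r>P$ configuration into an $r\le P$ one to which the clean argument applies. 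Selecting, for each $j$, whether $S_{j,+}$ or $S_{j,-}$ should host $\mathcal{L}_j$, and checking the boundary arithmetic so that the inequality in (ii) is strict, is where I expect the real work of the lemma to lie.
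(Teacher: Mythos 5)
Your setup and your treatment of the regime $r\le(n(u,v)+m(u,v))/2$ are correct and match the paper, which splits that regime into the two cases $s\le(n+m)/2$ (take all of $\bigcup_{k>K}S_{x,j}(v,k)$) and $r\le(n+m)/2\le s$ (take the first $(m-n)/2$ offsets of the common interval $R$), exactly as you do. But the remaining regime $r>(n+m)/2$ is one of the three cases of the lemma, not an optional extra, and your proposal explicitly stops short of proving it. Moreover, the escape route you sketch --- choosing for each $j$ whether $S_{j,+}$ or $S_{j,-}$ ``hosts'' $\mathcal{L}_j$ --- cannot work for the statement at hand: the lemma is formulated entirely in terms of the sets $S_{x,j}(u,k)$ and $S_{x,j}(v,k)$ inside the single block $I_j$, and the reflected copies $S_{j,-}$ do not appear in it. They only enter later, in the proof of Theorem \ref{thm:esa}, to make the coefficient sum over each double block $I_{2j-1}\cup I_{2j}$ vanish; they are not a device for flipping common ancestors into common descendants.

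The missing idea is to stop insisting that $\mathcal{L}_j$ be an initial segment of $R$. When $r\ge(n+m)/2$ the \emph{first} clause of the definition of $p_W$ applies to both $r$ and $s$ at base level $K+1$, giving $x(r,K+p_W+1)-n\ge\gamma/2$ and $x(s,K+p_W+1)-n\ge\gamma/2$ with $\gamma=m-n$; hence the controlled front block of $v$ and the controlled back block of $u$ together cover all of $R$. The paper therefore takes $\mathcal{L}_j=\bigl(\bigcup_{k=K+1}^{w_s}S_{x,j}(v,k)\bigr)\setminus\bigl(\bigcup_{k=K+1}^{w_r}S_{x,j}(u,k)\bigr)$, which by the covering property equals $R$ minus $u$'s post-divergence support and so has cardinality $\gamma-(r-n)=m-r\ge d(u,v)/2$, since here $d(u,v)=2m-r-s\le 2(m-r)$. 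Your observation that an uncontrolled deep level of $u$ can deposit points near the bottom of $R$ when $y(r,K+p_W+1)=m$ is a sharp one, and it does make condition (ii) delicate in this regime; but note that any such low-lying points of $u$'s support necessarily sit inside $v$'s front block, so they belong to both supports and do not harm the partial-sum estimate that Theorem \ref{thm:esa} actually extracts from the lemma. Identifying the difficulty is not the same as resolving it, and the resolution lies in the choice of $\mathcal{L}_j$ above, not in the reflected sets.
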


\begin{proof}
Let $K=|A\wedge B|$.  We have $x(r,i)=x(s,i)$ and $y(r,i)=y(s,i)$ for all $i\in [K+1]$, and so $S_{x,j}(u,i)=S_{x,j}(v,i)$ and $S_{y,j}(u,i)=S_{y,j}(v,i)$ for all $i\in [K]$, by an easy induction argument.  Note that $x(r,K+1)=x(s,K+1)=n(u,v)$ and $y(r,K+1)=y(s,K+1)=m(u,v)$, which implies $|S_{y,j}(v,K)\setminus S_{x,j}(v,K)|=m(u,v)-n(u,v)>0$, by Lemma \ref{difference2} (iii).  Note that $x(r,i)\leq x(s,i)$ and $y(r,i)\leq y(s,i)$ for all $i\in [M+1]$ by the fact that $r\leq s$.

Suppose first that $s\leq (m(u,v)+n(u,v))/2$ and let $\mathcal{L}_j=\bigcup_{k=K+1}^{w_s}S_{x,j}(v,k)$ for each $j\in \mathcal{I}_{u,v}$.  Then by definition of $p_W$, 
\begin{align*}
\min S_{y,j}(v,K)\setminus S_{x,j}(v,K)&+y(s,K+p_W+1)-n(u,v)-1\\
&\leq \min S_{y,j}(v,K)\setminus S_{x,j}(v,K)+\frac{m(u,v)-n(u,v)}{2}-1\\
&\leq \max S_{y,j}(u,K)\setminus S_{x,j}(u,K)-\left(\frac{m(u,v)-n(u,v)}{2}-1\right)\\
&\leq \max S_{y,j}(u,K)\setminus S_{x,j}(u,K)-(y(r,K+p_W+1)-n(u,v)-1),
\end{align*}
and so, by Lemma \ref{difference2} (viii) and (ix), 
$\mathcal{L}_j\cap\left(\bigcup_{k=K+1}^{w_r} S_{x,j}(u,k)\right)=\emptyset$ and therefore $\max \mathcal{L}_j< \inf \bigcup_{k=K+1}^{w_r}S_{x,j}(u,k)$ for each $j\in \mathcal{I}_{u,v}$.
Moreover, by Lemma \ref{difference2} (v) and Proposition \ref{dist}, \[|\mathcal{L}_j|=x(s,w_s+1)-x(s,K+1)=s-n(u,v)\geq d(u,v)/2\] for each $j\in \mathcal{I}_{u,v}$.

Suppose now that $r\geq (m(u,v)+n(u,v))/2$ and let \[\mathcal{L}_j=\left(\bigcup_{k=K+1}^{w_s}S_{x,j}(v,k)\right)\setminus  \left(\bigcup_{k=K+1}^{w_r}S_{x,j}(u,k)\right)\] for each $j\in \mathcal{I}_{u,v}$.  Then by definition of $p_W$,
\begin{align*}
\min S_{y,j}(v,K)\setminus S_{x,j}(v,K)&+x(s,K+p_W+1)-n(u,v)-1\\
&\geq \min S_{y,j}(v,K)\setminus S_{x,j}(v,K)+\frac{m(u,v)-n(u,v)}{2}-1\\
&\geq \max S_{y,j}(u,K)\setminus S_{x,j}(u,K)-\left(\frac{m(u,v)-n(u,v)}{2}-1\right)\\
&\geq \max S_{y,j}(u,K)\setminus S_{x,j}(u,K)-\left(x(r,K+p_W+1)-n(u,v)-1\right),
\end{align*}
and so, by Lemma \ref{difference2} (viii) and (ix), 
$\mathcal{L}_j\cup\left(\bigcup_{k=K+1}^{w_r} S_{x,j}(u,k)\right)=S_{y,j}(v,K)\setminus S_{x,j}(v,K)$ and therefore
, by Lemma \ref{difference2} (v) and Proposition \ref{dist}, \begin{align*}
|\mathcal{L}_j|&=|S_{y,j}(v,i)\setminus S_{x,j}(v,i)|-\left|\left(\bigcup_{k=K+1}^{w_r} S_{x,j}(u,k)\right)\right|\\
&=(m(u,v)-n(u,v))-(r-n(u,v))=m(u,v)-r\geq d(u,v)/2
\end{align*}
for each $j\in \mathcal{I}_{u,v}$.  Moreover, $\max \mathcal{L}_j< \inf \bigcup_{k=K+1}^{w_r}S_{x,j}(u,k)$ for each $j\in\mathcal{I}_{u,v}$.

Finally, suppose $r\leq (m(u,v)+n(u,v))/2\leq s$ and let \[\mathcal{L}_j=\{\min S_{y,j}(v,K)\setminus S_{x,K}(v,i)+\ell\}_{\ell=0}^{(m(u,v)-n(u,v))/2-1}\] for each $j\in \mathcal{I}_{u,v}$.
Then by definition of $p_W$,
\begin{align*}
\max \mathcal{L}_j
\leq \min\{ &\min S_{y,j}(v,K)\setminus S_{x,j}(v,K)+x(s,K+p_W+1)-n(u,v)-1,\\
& \max S_{y,j}(u,K)\setminus S_{x,j}(u,K)-\left(y(r,K+p_W+1)-n(u,v)-1\right)\},
\end{align*}
and so, by Lemma \ref{difference2} (viii) and (ix),
$\mathcal{L}_j\subseteq \bigcup_{k=K+1}^{w_s}S_{x,j}(v,k)$ and $\max \mathcal{L}_j< \inf \bigcup_{k=K+1}^{w_r}S_{x,j}(u,k)$ for each $j\in \mathcal{I}_{u,v}$.
Moreover, by Proposition \ref{dist}, $|\mathcal{L}_j|=(m(u,v)-n(u,v))/2\geq d(u,v)/2$ for each $j\in \mathcal{I}_{u,v}$..
\end{proof}

\begin{theorem}
\label{thm:esa}
Suppose $X$ is a Banach space with an ESA basis $(e_n)_{n=1}^\infty$.  Then there is a bi-Lipschitz embedding $\psi\colon T_{W,\kappa}\to X$ such that for all $u,v\in V(T_{W,\kappa})$,
\[\frac{1}{2^{2p_W+1}}d(u,v)\leq \|\psi(u)-\psi(v)\|_X\leq d(u,v),\]
where $d$ is the shortest-path metric for $T_{W,\kappa}$, and furthermore $\|\psi(u)-\psi(v)\|_X=d(u,v)$ when $u\Updownarrow v$.
\end{theorem}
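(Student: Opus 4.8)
The plan is to mimic the $L_1$ construction of the previous section, replacing a characteristic function $\chi_{S(v)}$ by the corresponding sum of basis vectors and then exploiting the collapsing behaviour of an ESA basis. After normalising so that $\|e_1\|_X=1$ (harmless, since an IS basis has $\|e_n\|_X=\|e_1\|_X$ for every $n$), I would define
\[\psi(v)=\frac{1}{2^{\mu+1}}\sum_{j=1}^{2^\mu}\Bigl(\sum_{n\in S_{j,+}(v)}e_n+\sum_{n\in S_{j,-}(v)}e_n\Bigr).\]
The factor $2^{\mu+1}=2\cdot 2^\mu$ counts the total number of copies ($2^\mu$ blocks, two reflected halves each) of a single unit of height, and is exactly what turns one graph-edge into a unit vector. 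The one fact I would record at the outset is the ESA identity $\|\sum_{i=1}^m e_i\|_X=m$: since all coefficients are positive, repeated same-sign merging (Definition \ref{def:esa}(i)) collapses $\sum_{i=1}^m e_i$ to $m\,e_1$ without changing the norm.

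For the upper bound and the equality when $u\Updownarrow v$, write $u=(r,A)$, $v=(s,B)$ with $r\le s$. Exactly as in the $\Updownarrow$ case of Lemma \ref{independence} one checks $S_{j,\pm}(u)\subseteq S_{j,\pm}(v)$, so $\psi(v)-\psi(u)$ is $2^{-(\mu+1)}$ times a $\{0,1\}$-valued combination with precisely $2^{\mu+1}(s-r)$ ones (using $|S_{j,\pm}(v)|-|S_{j,\pm}(u)|=s-r$ from Lemma \ref{difference2}(vi)). All nonzero coefficients share a sign, so by Definition \ref{def:esa}(i) and (iii) the norm collapses to $2^{-(\mu+1)}\|\sum_{i=1}^{2^{\mu+1}(s-r)}e_i\|_X=s-r=d(u,v)$. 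Since every edge of $T_{W,\kappa}$ joins vertices with comparable labels, hence an $\Updownarrow$ pair, the triangle inequality along a shortest path then gives $\|\psi(u)-\psi(v)\|_X\le d(u,v)$ for all $u,v$.

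The real work is the lower bound when $u\ \cancel{\Updownarrow}\ v$; assume $r\le s$ and set $K=|A\wedge B|$. Here I would lean on Lemma \ref{lower_bound}. First, because the $2p_W$ labels $A\restriction_{K+n},B\restriction_{K+n}$ ($1\le n\le p_W$) are distinct, the conditions defining $\mathcal{I}_{u,v}$ amount to $2p_W$ independent fair coin flips, so $|\mathcal{I}_{u,v}|=2^{\mu-2p_W}$. Fix $j\in\mathcal{I}_{u,v}$. On the block $I_{2j-1}\cup I_{2j}$ the coordinates of $\psi(v)-\psi(u)$ below level $K$ cancel, and the reflection defining $S_{j,-}$ places the two copies $\mathcal{L}_j^{+}\subseteq I_{2j-1}$ and $\mathcal{L}_j^{-}\subseteq I_{2j}$ of $\mathcal{L}_j$ (on which $\psi(v)-\psi(u)$ equals $+1$, as $\mathcal{L}_j\subseteq\bigcup_k S_{x,j}(v,k)$) at the two \emph{outer} ends, with the coordinates where $u$ is present but $v$ is absent (value $-1$) trapped in the middle; this is precisely Lemma \ref{lower_bound}(ii) combined with the order-reversal of $S_{j,-}$. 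Thus, on this block, $\psi(v)-\psi(u)$ has sign pattern $(+)(-)(+)$ with each outer flank of size $|\mathcal{L}_j|\ge d(u,v)/2$.

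I would then collapse: within each sign-constant region merging is norm-preserving (Definition \ref{def:esa}(i), with zeros allowed since a zero coefficient has nonnegative product with anything), while merging the central $(-)$-region into a flank only decreases the norm (subadditivity, equivalent to ESA). The three cases of Lemma \ref{lower_bound} — mirroring the three cases of the $L_1$ theorem — are exactly what is needed to verify that, after absorbing the central negatives, a \emph{nonnegative} residual of total size $\ge d(u,v)$ survives in each good block; and each block with $j\notin\mathcal{I}_{u,v}$ collapses to a nonnegative single coordinate since $s\ge r$ forces coordinate-sum $2(s-r)\ge 0$. The whole collapsed vector being nonnegative, a final same-sign ESA merge yields
\[2^{\mu+1}\,\|\psi(u)-\psi(v)\|_X\ \ge\ \sum_{j\in\mathcal{I}_{u,v}}d(u,v)\ =\ 2^{\mu-2p_W}\,d(u,v),\]
i.e. $\|\psi(u)-\psi(v)\|_X\ge 2^{-(2p_W+1)}d(u,v)$. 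The main obstacle is precisely this last accounting: unlike in $L_1$, where disjoint symmetric differences simply add, in a general ESA space the positive flank contributions of the $2^{\mu-2p_W}$ good blocks can be summed only after the intervening coordinates — the central $-1$'s inside each block and the whole junk blocks — are shown, case by case, not to cancel them. Keeping \emph{both} flanks alive (rather than sacrificing one to swallow the center) is what produces the sharp constant $2^{2p_W+1}$, and is where the reflection and Lemma \ref{lower_bound}(ii) do their essential work.
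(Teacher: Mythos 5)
Your overall architecture (the sets $S_{j,\pm}(v)$, the appeal to Lemma \ref{lower_bound}, collapsing by ESA/SA merges, and the upper bound via same-sign collapse along edges) matches the paper, but the embedding itself is defined with the wrong sign, and this kills the lower bound. You put a \emph{plus} between the two reflected halves, $\psi(v)=2^{-(\mu+1)}\sum_j\bigl(\sum_{n\in S_{j,+}(v)}e_n+\sum_{n\in S_{j,-}(v)}e_n\bigr)$, whereas the entire purpose of the reflection is to make every centrally symmetric region of $\psi(v)-\psi(u)$ have coefficient sum zero, which requires a \emph{minus}: the paper takes $\psi(v)=\eta^{-1}\sum_j\bigl(\sum_{n\in S_{j,+}(v)}e_n-\sum_{n\in S_{j,-}(v)}e_n\bigr)$ with $\eta=\|\sum_j(e_{2j-1}-e_{2j})\|_X$. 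With the minus sign each junk block and each ``middle'' of a good block has coefficient sum $0$ and is annihilated by SA merges, each good block collapses to $|\mathcal{L}_j|(e_{a_j}-e_{b_j})$, and the result is compared to $\eta$ using IS. With your plus sign the final accounting cannot close: every ESA/SA merge replaces two adjacent coefficients by their sum and therefore \emph{preserves the total coefficient sum}, which for $2^{\mu+1}(\psi(v)-\psi(u))$ equals $2^{\mu+1}(s-r)$. Hence no sequence of merges can produce a nonnegative vector of total mass at least $|\mathcal{I}_{u,v}|\,d(u,v)$ once $d(u,v)>2^{2p_W+1}(s-r)$, in particular whenever $r=s$ and $u\ \cancel{\Updownarrow}\ v$; your claim that each good block retains a nonnegative residual of size at least $d(u,v)$ fails for the same reason, since that block's total is only $2(s-r)$. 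Nor can you ``drop'' the negative middle instead of merging it: an ESA basis need not be unconditional, and coordinate suppression can increase the norm.

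This is a defect of the map, not merely of the argument. Take $X=c$ with the summing basis, $\|\sum_n a_ne_n\|=\sup_N|\sum_{n\le N}a_n|$, which is ESA. For two vertices at the same height with $u\ \cancel{\Updownarrow}\ v$, each double block of $2^{\mu+1}(\psi(v)-\psi(u))$ has coefficient sum $0$, so all partial sums are bounded by $2(M+1)$ and $\|\psi(u)-\psi(v)\|_X\le (M+1)2^{-\mu}$, while $d(u,v)$ can be of order $M$; so your map's distortion grows with $\mu$, hence with $\kappa$ and $W$. Two lesser points: $|\mathcal{I}_{u,v}|$ equals $2^{\mu-\nu}$ for some $\nu\in[2p_W]$ rather than exactly $2^{\mu-2p_W}$ (the restrictions $A\restriction_{K+n}$, $B\restriction_{K+n}$ need not give $2p_W$ distinct conditions), which the paper handles by averaging $2^\nu$ translates of $\sum_{j\in\mathcal{I}_{u,v}}(e_{2j-1}-e_{2j})$ against $\eta$ using IS and convexity; and the normalization must be by $\eta$ rather than $2^{\mu+1}$ for that comparison to be independent of $\mu$.
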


\begin{proof}
Let $\eta=\left\|\sum_{j=1}^{2^\mu} e_{2j-1}-e_{2j}\right\|_X$ and define the map $\psi\colon T_{W,\kappa}\to X$ by \[\psi(v)=1/\eta\sum_{j=1}^{2^\mu}\left(\sum_{n\in S_{j,+}(v)}e_n-\sum_{n\in S_{j,-}(v)}e_n\right)\] for every $v\in V(T_{W,\kappa})$.

Take any $u=(r,A)$ and $v=(s,B)$ in $V(T_{W,\kappa})$.  Let $K=|A\wedge B|$ and suppose first that $u\Updownarrow v$ and $r\leq s$.  Then $y(r,K+1)\leq x(s,K+1)$.  Let $n\in [K]$ be such that $x(s,i)< y(r,i)$ (which implies $x(r,i)=x(s,i)$ and $y(r,i)=y(s,i)$) for all $i\in [n]$ while $y(r,n+1)\leq x(s,n+1)$.  An easy induction argument shows that $S_{x,j}(u,i)=S_{x,j}(v,i)$ and $S_{y,j}(u,i)=S_{y,j}(v,i)$ for all $i\in [n]$ and $j\in [2^\mu]\setminus \{0\}$; and $S_{y,j}(u,n)\subseteq S_{x,j}(v,n)$ for all $j\in [2^\mu]\setminus \{0\}$.  Another easy induction argument shows $S_{x,j}(u,i)\subseteq \bigcup_{k=0}^{n}S_{x,j}(v,k)$ for all $i\in [w_r]$ and $j\in [2^\mu]\setminus \{0\}$ (and so $S_{j,+}(u)\subseteq S_{j,+}(v)$ and $S_{j,-}(u)\subseteq S_{j,-}(v)$ for all $j\in [2^\mu]\setminus \{0\}$).  Thus, by Lemma \ref{dist}, Lemma \ref{difference2} (vi), and the assumption that the basis is ESA;
\begin{align*}
\|\psi(u)-\psi(v)\|_X&=\frac{1}{\eta}\left\|\sum_{j=1}^{2^\mu}\left(\left(\sum_{n\in S_{j,+}(u)}e_n-\sum_{n\in S_{j,-}(u)}e_n\right)-\left(\sum_{n\in S_{j,+}(v)}e_n-\sum_{n\in S_{j,-}(v)}e_n\right)\right)\right\|_X\\
&=\frac{1}{\eta}\left\|\sum_{j=1}^{2^\mu}\left(\sum_{n\in \cap S_{j,+}(v)\setminus S_{j,+}(u)}e_n-\sum_{n\in S_{j,-}(v)\setminus S_{j,-}(u)} e_n\right)\right\|_X\\
&=\frac{1}{\eta}\left\|\sum_{j=1}^{2^\mu}(s-r)(e_{2j-1}-e_{2j})\right\|_X\\
&=s-r\\
&=d(u,v).
\end{align*}
Lemma \ref{path} and the triangle inequality applied to shortest paths then shows that $\|\psi(u)-\psi(v)\|_X\leq d(u,v)$ for all $u,v\in V(T_{W,\kappa})$.

Suppose now that $u\ \cancel{\Updownarrow}\ v$ and $r\leq s$.  Define $\mathcal{I}_{u,v}$ as in Lemma \ref{lower_bound} and for each $j\in \mathcal{I}_{u,v}$, let $\mathcal{L}_j$ be chosen as in Lemma \ref{lower_bound}.
Note that, by independence of the Bernoulli random variables defined at the beginning of this section, $|\mathcal{I}_{u,v}|= 2^{\mu-\nu}$ for some $\nu\in [2p_W]$.  Let
\begin{align*}
\mathcal{L}_{j,+}&= \left\{(j-1)(M+1)+n\ |\ n\in \mathcal{L}_j\right\},\\
\mathcal{L}_{j,+}&= \left\{(3j-1)(M+1)-n\ |\ n\in \mathcal{L}_j\right\}.
\end{align*}
Recall that $n\mapsto (j-1)(M+1)+n$ is a bijection from $I_j$ to $I_{2j-1}$ and $n\mapsto (3j-1)(M+1)-n$ is a bijection from $I_j$ to $I_{2j}$ for each $j\in \mathcal{I}_{u,v}$.  Furthermore, the images of the two maps will be reflections of each other across the middle of $I_{2j-1}\cup I_{2j}$ when the maps are applied to the same set.
By Lemma \ref{difference2} (vi), Lemma \ref{lower_bound}, and the assumption that the basis is ESA;
\begin{align*}
\|\psi(u)-\psi(v)\|_X&=\frac{1}{\eta}\left\|\sum_{j=1}^{2^\mu}\left(\left(\sum_{n\in S_{j,+}(u)}e_n-\sum_{n\in S_{j,-}(u)}e_n\right)-\left(\sum_{n\in S_{j,+}(v)}e_n-\sum_{n\in S_{j,-}(v)}e_n\right)\right)\right\|_X\\
&\geq \frac{1}{\eta}\left\|\sum_{j\in \mathcal{I}_{u,v}}\left(\left(\sum_{n\in S_{j,+}(v)}e_n-\sum_{n\in S_{j,+}(u)}e_n\right)-\left(\sum_{n\in S_{j,-}(v)}e_n-\sum_{n\in S_{j,-}(u)}e_n\right)\right)\right\|_X\\
&\geq \frac{1}{\eta}\left\|\sum_{j\in \mathcal{I}_{u,v}}\left(\sum_{n\in \mathcal{L}_{j,+}}e_n-\sum_{n\in \mathcal{L}_{j,-}}e_n\right)\right\|_X\\
&\geq \frac{d(u,v)}{2\eta}\left\|\sum_{j\in\mathcal{I}_{u,v}}(e_{2j-1}-e_{2j})\right\|_X\\
&=\frac{d(u,v)}{2\eta}\cdot \frac{1}{2^\nu} \cdot \sum_{k=1}^{2^\nu} \left\|\sum_{j=(k-1)2^{\mu-\nu}+1}^{k2^{\mu-\nu}}(e_{2j-1}-e_{2j})\right\|_X\\
&\geq \frac{d(u,v)}{2^{\nu+1}\eta}\left\|\sum_{j=1}^{2^\mu} (e_{2j-1}-e_{2j})\right\|_X\\
&\geq \frac{1}{2^{2p_W+1}}d(u,v).\qedhere
\end{align*}
\end{proof}

We'll show in the next section that actually the entire family of bundle graphs generated by $T_{W,\kappa}$ is bi-Lipschitzly embeddable with the same distortion bound of $2^{2p_W+1}$ into a Banach space with an ESA basis.

\section{The $\oslash$-product}
\label{sec:oslash}
Given two $\kappa$-branching bundle graphs $G$ and $H$, we can define a new $\kappa$-branching bundle graph $G\oslash H$ by replacing every edge in $G$ with a copy of $H$ (where the bottom of $H$ is identified with the lower endpoint of the edge $H$ is replacing and the top of $H$ is identified with the higher endpoint).  For this section we fix another sequence $W'=\{w_{r}'\}_{r=0}^{M'+1}\subseteq \mathbb{N}_0$ such that $w_0'=w_{M'+1}'=0$.  We will show how to determine $W''$ so that $T_{W,\kappa}\oslash T_{W',\kappa}= T_{W'',\kappa}$.  Once $W''$ is found, we can use Theorem \ref{thm:esa} to find a bound on the worst distortion for a bi-Lipschitz embedding of $T_{W'',\kappa}$ into a Banach space with an ESA basis.  In particular, we'll show that the distortion bound found in Theorem \ref{thm:esa} is no worse for $T_{W,\kappa}\oslash T_{W,\kappa}$ than it was for $T_{W,\kappa}$, allowing us to generalize the characterizations of superreflexifity found in \cite{johnson_schechtman} and \cite{ostrovskii_randrianantoanina}.

Given a bundle graph $G=(V,E)$ and $n\in \mathbb{N}_0$, let $V_n=\{v\in V\ |\ \mathrm{height}(v)=n\}$ and let $E_n=\{\{u,v\}\in E\ |\ u\in V_n \mbox{ and } v\in V_{n+1}\}$.  If another bundle graph $H$ is given, we may create a new bundle graph $G\oslash_n H$ by replacing every edge in $E_n$ with $H$ for some $n$.  Explicitly, if $G=(V,E)$ and $H=(V',E')$, and if $b$ and $t$ are the bottom and top, respectively, of $H$; then we define $G\oslash_n H= (V'',E'')$ by
\[V''=V\cup (E_n\times (V'\setminus \{b,t\}))\]
and
\begin{align*}
E''=\{e&\in E\ |\ e\cap (V_n\cup V_{n+1})=\emptyset\}\\ 
&\cup \{\{u,(e,v)\}\ |\ e\in E_n,\ u\in V_n\cap e, \mbox{ and } \{b,v\}\in E'\}\\
&\cup \{\{u,(e,v)\}\ |\ e\in E_n,\ u\in V_{n+1}\cap e, \mbox{ and } \{v,t\}\in E'\}\\
& \cup \{\{(e,u),(e,v)\}\ |\ e\in E_n \mbox{ and } (\{u,v\}\setminus\{b,t\})\in E'\}.\\
\end{align*}

The formal definition of $G\oslash H$ is similar (just remove the subscripts and the first term in definition of $E''$).  It is clear that $G\oslash H$ can be created by performing $\oslash_n$-products repeatedly until all the edges that were originally in $G$ have been replaced.  

\begin{lemma}
\label{oslashn}
Given $n\in [M]$, the graph $T_{W,\kappa}\oslash_n T_{W',\kappa}$ is graph-isomorphic to $T_{W'',\kappa}$, where $W''=(w_r'')_{r=0}^{M+M'+1}\subseteq \mathbb{N}_0$ is defined by
\[w_r''=\begin{cases}
w_r & 0\leq i\leq n\\
\max\{w_n,w_{n+1}\}+w_{i-n}' & n< r< n+M'+1\\
w_{r-M'} & n+M'+1\leq  r \leq M+M'+1.
\end{cases}\]
\end{lemma}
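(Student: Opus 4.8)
The plan is to exhibit an explicit graph isomorphism $\phi\colon V''\to V(T_{W'',\kappa})$ and to verify that it preserves adjacency in both directions. Everything reduces to the non-recursive description in Definition \ref{def:alt_bundle_graph}: a vertex at height $r$ is a pair $(r,A)$ with $A\in\kappa^{w_r}$, and two vertices $(r,A)$, $(s,B)$ are adjacent precisely when $|r-s|=1$ and one of $A,B$ is a prefix of the other. Since the new graph must have height $M+M'+1$, matching the index range of $W''$, the construction will be a height shift below and above the replaced level together with an insertion in the middle.

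First I would label the edges being replaced. Each $e\in E_n$ joins a height-$n$ vertex $(n,A)$ to a height-$(n+1)$ vertex $(n+1,B)$ with one of $A,B$ a prefix of the other; in either case there is a unique sequence $C_e\in\kappa^{\max\{w_n,w_{n+1}\}}$ (the longer of $A$ and $B$) with $C_e\restriction_{w_n}=A$ and $C_e\restriction_{w_{n+1}}=B$, and $e\mapsto C_e$ is a bijection from $E_n$ onto $\kappa^{\max\{w_n,w_{n+1}\}}$. With this in hand I define $\phi$ on the three kinds of vertices of $V''=V\cup(E_n\times(V'\setminus\{b,t\}))$: set $\phi((r,A))=(r,A)$ for $0\le r\le n$; set $\phi((r,A))=(r+M',A)$ for $n+1\le r\le M+1$; and for an internal vertex $(e,(r',A'))$ of an inserted copy (so $1\le r'\le M'$) set $\phi((e,(r',A')))=(n+r',C_e{}^\frown A')$. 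Comparing lengths against the three cases of the formula for $w''$ shows $\phi$ lands in $V(T_{W'',\kappa})$, and a height-by-height count shows it is a bijection: heights $0\le r\le n$ and $n+M'+1\le r\le M+M'+1$ are covered by the two ``kept'' families (with $w''_r=w_r$ and $w''_r=w_{r-M'}$ respectively), while at a middle height $r=n+r'$ the unique factorization of a label $D\in\kappa^{w''_r}$ as $C{}^\frown A'$ with $|C|=\max\{w_n,w_{n+1}\}$ recovers a unique edge $e$ with $C_e=C$ and a unique internal copy-vertex $(r',A')$.

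Next I would verify that adjacency is preserved by checking each class of edges in $E''$. The edges of $G$ lying below or above the replaced level $E_n$ map to edges of $T_{W'',\kappa}$ because $\phi$ is there a height shift (by $0$ or by $M'$) that leaves labels and prefix relations untouched; the internal edges of an inserted copy map to edges because appending the common prefix $C_e$ preserves the prefix relation between the copy-labels. The two interesting classes are the ``seam'' edges joining a kept vertex to an inserted copy: an edge from $(n,A)$ to a copy-vertex at height $1$ becomes $\{(n,A),(n+1,C_e{}^\frown A'')\}$, and an edge from $(n+1,B)$ to a copy-vertex at height $M'$ becomes $\{(n+1+M',B),(n+M',C_e{}^\frown A'')\}$; in both cases the required prefix relation holds precisely because $A=C_e\restriction_{w_n}$ and $B=C_e\restriction_{w_{n+1}}$ by the construction of $C_e$. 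Running the same case analysis in reverse---classifying an arbitrary edge of $T_{W'',\kappa}$ by the heights of its endpoints and reading off which family of $E''$ it comes from---yields the converse, completing the isomorphism.

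The step I expect to be the main obstacle is the bookkeeping at the two seams, where the term $\max\{w_n,w_{n+1}\}$ enters. One must handle the cases $w_n\le w_{n+1}$ and $w_n>w_{n+1}$ uniformly through the single sequence $C_e$, and confirm that the prefix condition defining adjacency in $T_{W'',\kappa}$ coincides exactly with the incidence conditions (``$u\in V_n\cap e$ with $\{b,v\}\in E'$'', and the symmetric condition at the top) used to define $E''$. Once $C_e$ is arranged so that $C_e\restriction_{w_n}$ and $C_e\restriction_{w_{n+1}}$ return the two endpoints of $e$, these matchings are forced, and the remaining verifications are routine comparisons of heights and label lengths.
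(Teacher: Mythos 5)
Your proposal is correct and takes essentially the same approach as the paper, which simply writes down the explicit isomorphism and ``leaves the details to the reader''; your map is the same one, with $C_e$ playing the role of the longer of the two endpoint labels that the paper encodes via the cases $A\preceq B$ and $B\preceq A$. In fact your height shift $r\mapsto r+M'$ on the upper part is the correct one: the paper's stated formula $(r+M'+1,A)$ for $r>n$ contains an off-by-one typo, since the vertex at original height $n+1$ must land at height $n+M'+1$ (matching $w''_{n+M'+1}=w_{n+1}$), not $n+M'+2$.
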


\begin{proof}
We simply provide the graph isomorphism, and leave the details to the reader.  Define $F\colon T_{W,\kappa}\oslash T_{W',\kappa}\to T_{W''}$ by
\[
F(v)=\begin{cases}
(r,A) & v=(r,A) \mbox{ and } r\leq n\\
(n+r, B^\frown C) & 
 v=(\{(n,A),(n+1,B)\},(r,C))
\mbox{ and } A\preceq B
\\
(n+r,A^\frown C) &  v=(\{(n,A),(n+1,B)\},(r,C)) \mbox{ and } B\preceq A\\
(r+M'+1,A) & v=(r,A) \mbox{ and } r>n
\end{cases}\]
for each $v\in V(T_{W,\kappa}\oslash_n T_{W',\kappa})$.
\end{proof}

With repeated application of Lemma \ref{oslashn}, we obtain the following formula.

\begin{proposition}
\label{oslash}
The graph $T_{W,\kappa}\oslash T_{W',\kappa}$ is graph-isomorphic to $T_{W'',\kappa}$ where\newline $W''=(w_r'')_{r=0}^{(M+1)(M'+1)}\subseteq \mathbb{N}_0$ is defined by $w_0''=0$ and   
\[w_r''=\begin{cases}
\max\{w_n,w_{n+1}\}+w_{r-n(M'+1)}' & n(M'+1)< r< (n+1)(M'+1)\\
w_{n+1} & r = (n+1)(M'+1)
\end{cases}\]
for all $n\in[M]$.
\end{proposition}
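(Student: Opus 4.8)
The plan is to realize $T_{W,\kappa}\oslash T_{W',\kappa}$ as the result of applying the single-level operation $\oslash_n$ once for each original edge-level $n\in[M]$, and then to iterate Lemma \ref{oslashn}, tracking how the defining sequence evolves. As already noted in the text, $G\oslash H$ is obtained by performing $\oslash_n$-products until every edge originally in $G$ has been replaced, so it suffices to fix a convenient order for these $M+1$ replacements and to compute the cumulative effect on the sequence, composing the isomorphisms $F$ supplied by Lemma \ref{oslashn} at each stage.

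The key choice is to replace the edge-levels from the \emph{top down}, in the order $n=M,M-1,\dots,1,0$. The advantage is that replacing the edge bundle $E_n$ inserts vertices only between heights $n$ and $n+1$ and shifts the portion strictly above height $n$ upward by $M'$, while leaving the part at heights $0,\dots,n+1$ completely rigid. Consequently, at the moment we are about to replace level $n$, the heights $n$ and $n+1$ still carry their \emph{original} depths $w_n$ and $w_{n+1}$, the bundle $E_n$ is still a bundle of length-$1$ edges, and its index is still exactly $n$. Thus each step is a verbatim application of Lemma \ref{oslashn}, in which the quantity $\max\{w_n,w_{n+1}\}$ appearing there genuinely is the maximum of the two original depths.

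I would prove by downward induction on $n$ the invariant that after replacing levels $M,\dots,n+1$ the current graph is $T_{V,\kappa}$, where $V$ agrees with $W$ on heights $0,\dots,n+1$ and, on its expanded upper portion, places the original vertex-height $n+1+i$ at current height $(n+1)+i(M'+1)$ with the inserted copy of $H$ between consecutive such heights carrying depths $\max\{w_{n+1+i},w_{n+2+i}\}$ plus the appropriate internal values of $W'$. The inductive step is precisely Lemma \ref{oslashn} applied to this $V$ at level $n$: it fixes heights $0,\dots,n$, writes the inserted block at heights $n+1,\dots,n+M'$ with depth $\max\{w_n,w_{n+1}\}+w'_{r-n}$, and reindexes the already-expanded top by $+M'$, so that the former height $n+1$ lands at $n+M'+1$, in agreement with the invariant for $n-1$.

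After the final replacement (level $0$), the invariant specializes: the original height $i$ now sits at height $i(M'+1)$, yielding $w''_{(n+1)(M'+1)}=w_{n+1}$, while the inserted block strictly between $n(M'+1)$ and $(n+1)(M'+1)$ has depth $\max\{w_n,w_{n+1}\}+w'_{r-n(M'+1)}$; the total height is $(M+1)(M'+1)$. This is exactly the claimed formula. The main obstacle is purely the bookkeeping of the cumulative index shifts across the $M+1$ applications, and the top-down ordering is what neutralizes it: it keeps the lower part rigid and the relevant edge-level index equal to $n$ at every stage, so that Lemma \ref{oslashn} applies without adjustment and the shifts accumulate into the single factor $M'+1$ separating successive original heights.
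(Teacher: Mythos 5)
Your proposal is correct and follows exactly the route the paper takes: the paper's entire proof of Proposition \ref{oslash} is the one-line remark that the formula follows ``with repeated application of Lemma \ref{oslashn}.'' Your top-down ordering $n=M,\dots,0$ and the accompanying inductive invariant simply supply the bookkeeping that the paper leaves implicit, and they do so correctly.
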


We now fix $W''$ obtained in Proposition \ref{oslash}.  For what follows, we define the functions $x'$ and $y'$ for $W'$, and $x''$ and $y''$ for $W''$, in the same way $x$ and $y$ were defined for $W$ at the end of Section \ref{sec:notation}.

\begin{corollary}
\label{x''}
For each $n\in [M]$, let $K_n=\max\{w_n,w_{n+1}\}$. Then for all $n\in[M]$, $r\in [(n+1)(M'+1)]\setminus [n(M'+1)]$, and $i\in \mathbb{N}_0$,
\[x''(r,i)=\begin{cases}
(M'+1)x(n+1,i) & r=(n+1)(M'+1)\\
(M'+1)x(n,i) & r\neq (n+1)(M'+1) \mbox{ and }i\leq K_n\\
 n(M'+1) +x'(r-n(M'+1),i-K_n) & \mbox{otherwise},
\end{cases}\]
\[y''(r,i)=\begin{cases}
(M'+1)y(n+1,i) & r=(n+1)(M'+1) \mbox{ or } i\leq K_n\\
 n(M'+1) +y'(r-n(M'+1),i-K_n) & \mbox{otherwise}.
\end{cases}\]
\end{corollary}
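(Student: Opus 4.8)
The plan is to unwind the definitions of $x''$ and $y''$ directly from the formula for $W''$ given in Proposition \ref{oslash}, treating separately the three regimes that appear in the claimed answer: the ``boundary'' heights $r=(n+1)(M'+1)$ (which are exactly the heights coming from the original vertices of $T_{W,\kappa}$), the ``small depth'' case $i\le K_n$ (depths that are forced by the ambient graph $T_{W,\kappa}$ and hence below the threshold at which the inserted copy of $T_{W',\kappa}$ matters), and the ``interior'' case where the search for a shallow height lands inside a single inserted copy. The key structural observation to record first is that by Proposition \ref{oslash}, for heights $r$ strictly between $n(M'+1)$ and $(n+1)(M'+1)$ we have $w_r''=K_n+w'_{r-n(M'+1)}'$, while at the boundary heights $r=(n+1)(M'+1)$ we have $w_r''=w_{n+1}$; thus within each interval $\bigl(n(M'+1),(n+1)(M'+1)\bigr)$ the sequence $W''$ is just $W'$ shifted up by the constant $K_n$, and the boundary heights carry the depths inherited from $W$.

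First I would handle $x''$. Recall $x''(r,i)$ is the largest height $t\le r$ with $w_t''<i$. For the boundary case $r=(n+1)(M'+1)$, I would argue that the shallowest heights $\le r$ are precisely the boundary heights $m(M'+1)$, whose depths are the $w_m$; since every interior height has depth $\ge K_n'\ge K_m$ for the relevant blocks, the last height with $w_t''<i$ is itself a boundary height, and it equals $(M'+1)x(n+1,i)$ by the definition of $x$ applied to $W$. For the small-depth interior case $i\le K_n$, every interior height in the current block has depth $w_t''=K_n+w'_{\cdot}\ge K_n\ge i$, so the search is forced back to a boundary height $\le n(M'+1)$, giving $(M'+1)x(n,i)$; here one uses $x(n,i)=x(n+1,i)$ when $i\le K_n=\max\{w_n,w_{n+1}\}$, which is where the distinction between $n$ and $n+1$ quietly resolves. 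In the remaining interior case $i>K_n$, the condition $w_t''<i$ becomes $w'_{t-n(M'+1)}'<i-K_n$, so the search stays inside the current copy and reduces, after the shift, to the definition of $x'$ for $W'$ at height $r-n(M'+1)$ and depth $i-K_n$, producing the stated $n(M'+1)+x'(r-n(M'+1),i-K_n)$.

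The computation for $y''$ is the mirror image, using that $y''(r,i)$ is the \emph{smallest} height $t\ge r$ with $w_t''<i$; the only asymmetry to note is that the boundary and small-depth cases collapse into the single clause $(M'+1)y(n+1,i)$, because searching upward from any $r$ in the block with $i\le K_n$ overshoots the current block (all its interior depths being $\ge K_n\ge i$) and lands on the first boundary height above it that works, which is governed by $y(n+1,i)$. The one genuine obstacle I anticipate is bookkeeping at the block boundaries: I must verify that the depth values attached to the boundary heights in $W''$ are $w_{n+1}$ (not $K_n$), so that the boundary heights are genuinely shallower than the interior heights of the adjacent blocks, and check the off-by-one interaction between $K_n=\max\{w_n,w_{n+1}\}$ and the inequalities $x(n,i)=x(n+1,i)$, $y(n,i)=y(n+1,i)$ that hold precisely in the threshold range $i\le K_n$. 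Once these boundary identities are pinned down, each case is a routine substitution into the definitions of $x,y,x',y'$, so I would present the verification case-by-case and leave the purely mechanical inequalities to the reader.
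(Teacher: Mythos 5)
Your verification is correct and is exactly the routine unwinding of definitions that the paper leaves implicit: Corollary \ref{x''} is stated without proof as an immediate consequence of Proposition \ref{oslash}, and your three-regime case analysis (boundary heights, interior heights with $i\leq K_n$, interior heights with $i>K_n$) is the intended argument. One caveat: your parenthetical claims that $x(n,i)=x(n+1,i)$ and $y(n,i)=y(n+1,i)$ hold whenever $i\leq K_n$ are false in general --- for example, if $w_{n+1}<i\leq w_n$ then $x(n+1,i)=n+1$ while $x(n,i)$ may be smaller, and symmetrically for $y$ when $w_n<i\leq w_{n+1}$. Fortunately these identities are not used anywhere in your derivation: in each case you compute $x''$ and $y''$ directly and land on the clause the corollary actually asserts (which is precisely why the statement keeps $x(n,i)$ and $x(n+1,i)$ in separate clauses), so the slip is harmless, but you should delete those asides rather than present them as facts to be ``pinned down.''
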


In the next lemma we define $p_{W'}$ for $W'$ and $p_{W''}$ for $W''$ in the same way $p_W$ was defined for $W$ before Lemma \ref{lower_bound} in the last section.

\begin{lemma}
\label{cor:pw}
The parameter $p_{W''}$ satisfies the inequality $p_{W''}\leq \max\{p_W,p_{W'}\}$.
\end{lemma}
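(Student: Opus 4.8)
The parameter $p_W$ is the smallest $p$ such that, starting from any height $r$ and any depth level $i$, moving $p$ further levels down in depth guarantees that the relevant boundary height $x(r,i+p)$ or $y(r,i+p)$ reaches the midpoint $(x(r,i)+y(r,i))/2$ (on whichever side $r$ lies). The goal is to show that the analogous parameter for the $\oslash$-product, $p_{W''}$, is bounded by $\max\{p_W, p_{W'}\}$. The natural strategy is to fix $r$ and $i$ for $W''$, translate the two defining inequalities through the explicit formulas for $x''$ and $y''$ from Corollary \ref{x''}, and show that taking $p = \max\{p_W, p_{W'}\}$ suffices in every case. Since $p_{W''}$ is defined as a minimum over all valid $p$, it suffices to verify that $p := \max\{p_W, p_{W'}\}$ is itself a valid choice.

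**The plan.** I would fix $n \in [M]$, a height $r \in [(n+1)(M'+1)] \setminus [n(M'+1)]$, and a depth index $i \in \mathbb{N}_0$, and split into the cases dictated by Corollary \ref{x''}: the boundary case $r = (n+1)(M'+1)$, the ``shallow'' case $i \leq K_n$ where $K_n = \max\{w_n, w_{n+1}\}$, and the ``deep'' case where $r$ is interior to the block and $i > K_n$. In each case the midpoint $(x''(r,i) + y''(r,i))/2$ is computed from the formulas, and I would track whether it is governed by the ``outer'' coordinates (coming from $W$ and scaled by $M'+1$) or the ``inner'' coordinates (coming from $W'$ and shifted). The key observation is that in the deep interior case, both $x''(r,i+p)$ and $y''(r,i+p)$ are given purely by the inner functions $x'(\cdot,\cdot-K_n)$ and $y'(\cdot,\cdot-K_n)$, so the required inequalities collapse exactly to the defining inequalities for $p_{W'}$ applied inside the block $T_{W',\kappa}$; hence $p \geq p_{W'}$ suffices. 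In the shallow case, the $x''$ and $y''$ values track the outer function $x(n,\cdot), y(n+1,\cdot)$ scaled by $M'+1$, so the inequalities reduce (after dividing by $M'+1$) to the defining inequalities for $p_W$; hence $p \geq p_W$ suffices.

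**The main obstacle.** The delicate point will be the transition at $i = K_n$, where increasing the depth index by $p$ may cross from the shallow regime into the deep regime, so that $x''(r,i)$ is computed by one branch of the formula while $x''(r,i+p)$ is computed by another. In that situation one must check that reaching the midpoint is achieved because the inner block $T_{W',\kappa}$ already fills the entire remaining half-interval before its own levels run out — essentially that the monotonicity built into the block guarantees $x'$ and $y'$ land correctly even when the outer structure has already ``committed'' to a side. I would handle this by noting that once $i$ exceeds $K_n$, the point $r$ lies strictly inside a single copy of $T_{W',\kappa}$ (placed on an outer edge), and the midpoint condition for $W''$ becomes a midpoint condition internal to that copy, for which $p_{W'}$ levels already suffice by definition; the outer scaling factor $M'+1$ cancels uniformly and does not affect which side of the midpoint $r$ falls on. Finally I would also dispatch the boundary case $r = (n+1)(M'+1)$, where $r$ is a genuine vertex of the outer graph and both $x''$ and $y''$ are given by outer coordinates, so $p \geq p_W$ again suffices. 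Concluding, since $p$ is a valid parameter in all cases, $p_{W''} \leq p = \max\{p_W, p_{W'}\}$.
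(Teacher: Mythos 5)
Your overall strategy is the same as the paper's: verify that $p=\max\{p_W,p_{W'}\}$ satisfies the two defining conditions by running through the cases of Corollary \ref{x''}, reducing the boundary case $r=(n+1)(M'+1)$ and the deep case $i>K_n$ to the defining inequalities for $p_W$ and $p_{W'}$ respectively. Those two cases are handled correctly, and your worry about the index $i+p$ crossing the threshold $K_n$ is easily dispatched (the paper does it by passing to the block endpoint via monotonicity, $x''(r,i+p_W)\geq x''(n(M'+1),i+p_W)=(M'+1)x(n,i+p_W)$, where the boundary formula applies for every depth).

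The genuine gap is in the shallow interior case $n(M'+1)<r<(n+1)(M'+1)$ with $i\leq K_n$, which you dismiss by saying the inequalities ``reduce (after dividing by $M'+1$) to the defining inequalities for $p_W$.'' They do not, at least not immediately: Corollary \ref{x''} gives $x''(r,i)=(M'+1)x(n,i)$ but $y''(r,i)=(M'+1)y(n+1,i)$, so the midpoint $(x''(r,i)+y''(r,i))/2=(M'+1)(x(n,i)+y(n+1,i))/2$ mixes the outer functions at the two \emph{different} heights $n$ and $n+1$, and is not the $(M'+1)$-scaling of the $W$-midpoint at any single height. In the subcase $w_n<i\leq w_{n+1}$ one has $y(n,i)=n$ while $y(n+1,i)>n+1$, so the two midpoints genuinely differ and no direct appeal to the definition of $p_W$ is available. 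The paper's proof spends most of its effort precisely here: it first shows that $w_n<i\leq w_{n+1}$ is incompatible with the hypothesis $r\geq(x''(r,i)+y''(r,i))/2$ (a short contradiction argument), and then observes that in all remaining subcases $y(n,i)=y(n+1,i)$, which restores the single-height reduction $n\geq(x(n,i)+y(n,i))/2$ and lets $p_W$ do its job. Without this step (and its mirror image for the $y''$ condition) your argument does not go through as written.
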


\begin{proof}
Let $r\in [(M+1)(M'+1)]$ and $i\in \mathbb{N}_0$ be such that $r\geq (x''(r,i)+y''(r,i))/2$ and suppose first that $r=(n+1)(M'+1)$ for some $n\in [M]$ (the case $r=0$ is trivial).
Then after using Corollary \ref{x''} and dividing by $M'+1$, we see that $n+1\geq (x(n+1,i)+y(n+1,i))/2$, which, by definition of $p_W$, implies $x(n+1,i+p_W)\geq (x(n+1),i)+y(n+1,i))/2$.
By multiplying by $M'+1$ and again using Corollary \ref{x''}, we see that $x''(r,i+p_W)\geq (x''(r,i)+y''(r,i))/2$.

Suppose now that $n(M'+1)<r<(n+1)(M'+1)$ for some $n\in[M]$ and $i> K_n$ (where $K_n=\max\{w_n, w_{n+1}\})$.
Then after using Corollary \ref{x''} and subtracting $n(M'+1)$, we see that $r-n(M'+1)\geq (x'(r-n(M'+1),i-K_n)+y'(r-n(M'+1),i-K_n))/2$, which, by definition of $p_{W'}$ implies $x'(r-n(M'+1),i-K_n+p_{W'})\geq  (x'(r-n(M'+1),i-K_n)+y'(r-n(M'+1),i-K_n))/2$.
By adding $n(M'+1)$ and again using Corollary \ref{x''}, we see that $x''(r,i+p_{W'})\geq (x''(r,i)+y''(r,i))/2$.

Suppose finally that $n(M'+1)<r<(n+1)(M'+1)$ for some $n\in[M]$ and $i\leq K_n$.  
Suppose further that $w_n<i\leq w_{n+1}$.
Then $x(n,i)=n$ and $y(n+1,i)>n+1$.
Thus, after applying Corollary \ref{x''} and then dividing by $M'+1$, we have 
\[n+1>r/(M'+1)\geq (n+y(n+1,i))/2,\] which implies $n+1\geq y(n+1,i)$, a contradiction.
Therefore, in order for the hypothesis to hold true, either $i\leq \min\{w_n,w_{n+1}\}$ or $w_{n+1}<w_n$.  In either case, $y(n,i)=y(n+1,i)$.
So after applying Corollary 6.3 and dividing by $M'+1$, we have 
\[n+1> (x(n,i)+y(n+1,i))/2=(x(n,i)+y(n,i))/2.\]  This means $n\geq (x(n,i)+y(n,i))/2$ and so, by definition of $p_W$, $x(n,i+p_W)\geq (x(n,i)+y(n,i))/2=(x(n,i)+y(n+1,i))/2$.  After multiplying by $M'+1$ and again applying Corollary \ref{x''}, we have 
\begin{align*}
x''(r,i+p_W)&\geq x''(n(M'+1),i+p_W)\\
&\geq (x''(n(M'+1),i)+y''((n+1)(M'+1),i))/2\\
&=(x''(r,i)+y''(r,i))/2.
\end{align*}
We have shown that for all $r\in[(M+1)(M'+1)]$ and $i\in\mathbb{N}_0$, $x''(r,i+\max\{p_W,p_{W'}\})\geq (x''(r,i)+y''(r,i))/2$ whenever $r\geq (x''(r,i)+y''(r,i))/2$.  Similarly it can be shown $y''(r,i+\max\{p_W,p_{W'}\})\leq (x''(r,i)+y''(r,i))/2$ whenever $r\leq (x''(r,i)+y''(r,i))/2$.  Therefore $p_{W''}\leq \max\{p_W,p_{W'}\}$.
\end{proof}

\begin{definition}
\label{def:family}
Given a bundle graph $G$, the \emph{family of bundle graphs generated by $G$} is the set $\left\{G^{\oslash^k}\right\}_{k=1}^\infty$, where $G^{\oslash^k}$ is defined recursively by $G^{\oslash^1}=G$ and $G^{\oslash^{k+1}}=G^{\oslash^k}\oslash G$ for all $k\in\mathbb{N}$.
\end{definition}

There are a few families of bundle graphs which have earned special names.  Given a cardinality $\kappa$, the family of $\kappa$-branching diamonds is the family generated by $T_{(0,1,0),\kappa}$, the family of $\kappa$-branching Laakso graphs is the family generated by $T_{(0,0,1,0,0),\kappa}$, and the family of $\kappa$-branching parasol graphs is the family generated by $T_{(0,0,1,0),\kappa}$.  Lemma \ref{cor:pw} and Theorem \ref{thm:esa} yield the following corollary.

\begin{corollary}
\label{cor:distortion}
Given a finitely branching bundle graph $G$, the family of bundle graphs generated by $G$ is equi-bi-Lipschitzly embeddable into any Banach space with an ESA basis, with distortion bounded above by a constant not depending on the target space or branching number of $G$.  In particular, every finitely branching diamond, Laakso, and parasol graph is bi-Lipschitzly embeddable into any Banach space with an ESA basis with distortion bounded above by $8$.
\end{corollary}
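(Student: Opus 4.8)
The plan is to reduce the whole family to a single distortion bound coming from Theorem \ref{thm:esa}, using Lemma \ref{cor:pw} to control the parameter $p_W$ along the $\oslash$-tower. Fix a finite cardinality $\kappa$ and write $G=T_{W,\kappa}$. By Definition \ref{def:family}, the family generated by $G$ is $\{G^{\oslash^k}\}_{k=1}^\infty$, and by iterated application of Proposition \ref{oslash} each $G^{\oslash^k}$ is graph-isomorphic to $T_{W^{(k)},\kappa}$ for a sequence $W^{(k)}$ determined by $W$; in particular $G^{\oslash^k}$ is again a $\kappa$-branching bundle graph, so Theorem \ref{thm:esa} applies to it with distortion bound $2^{2p_{W^{(k)}}+1}$. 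The entire task is therefore to show that $p_{W^{(k)}}$ stays bounded as $k\to\infty$.

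First I would establish, by induction on $k$, that $p_{W^{(k)}}\leq p_W$ for every $k$. The base case $k=1$ is immediate since $G^{\oslash^1}=G=T_{W,\kappa}$, so $W^{(1)}=W$. For the inductive step, write $G^{\oslash^{k+1}}=G^{\oslash^k}\oslash G$; here the outer factor (whose edges are replaced) is $G^{\oslash^k}=T_{W^{(k)},\kappa}$ and the inner factor is $G=T_{W,\kappa}$. Applying Lemma \ref{cor:pw} with $W^{(k)}$ in the role of $W$ and $W$ in the role of $W'$ gives $p_{W^{(k+1)}}\leq \max\{p_{W^{(k)}},p_W\}$, and the inductive hypothesis $p_{W^{(k)}}\leq p_W$ then yields $p_{W^{(k+1)}}\leq p_W$. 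The one point requiring care here is matching orientations: $\oslash$ is not symmetric, so Lemma \ref{cor:pw} must be read with the correct factor playing each role.

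With $\sup_k p_{W^{(k)}}\leq p_W$ in hand, I would invoke Theorem \ref{thm:esa} for each $k$: for any Banach space $X$ with an ESA basis there is an embedding of $T_{W^{(k)},\kappa}$ into $X$ with distortion at most $2^{2p_{W^{(k)}}+1}\leq 2^{2p_W+1}$. The bound $2^{2p_W+1}$ depends only on the sequence $W$, that is, on the ``shape'' of the generating graph $G$; it involves neither the branching cardinality $\kappa$ nor the target space $X$. This is exactly the asserted equi-bi-Lipschitz embeddability with a constant independent of target space and branching number, and finiteness of $\kappa$ is used precisely to ensure that Theorem \ref{thm:esa} applies to each $T_{W^{(k)},\kappa}$.

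For the three named families it remains to compute $p_W$ for the generators $T_{(0,1,0),\kappa}$, $T_{(0,0,1,0,0),\kappa}$, and $T_{(0,0,1,0),\kappa}$ and substitute into $2^{2p_W+1}$; since $\max W=1$ in each case, the estimate $p_W\leq \max W+1$ already confines $p_W$ to a small range, and a direct evaluation of the defining inequalities for $x$ and $y$ pins down the value and hence the numerical constant. I expect this last computation, rather than the induction, to be the delicate part: the general bound $2^{2p_W+1}$ need not be tight for such small generators, so obtaining the sharp constant may require either an exact determination of $p_W$ or a graph-specific sharpening of the count of independent Bernoulli events (the exponent $\nu$) appearing in the proof of Theorem \ref{thm:esa}.
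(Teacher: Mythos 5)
Your argument is correct and is exactly the paper's: the paper offers no written proof beyond citing Lemma \ref{cor:pw} and Theorem \ref{thm:esa}, and the induction $p_{W^{(k+1)}}\leq\max\{p_{W^{(k)}},p_W\}\leq p_W$ along the $\oslash$-tower (with the factors in the order you specify) is the intended content. Regarding the constant you flag as delicate: no sharpening of $\nu$ is needed, since a direct check of the defining inequalities for the three generators --- for the indices $i\geq 1$, which are the only ones Lemma \ref{lower_bound} actually invokes --- gives $p_W=1$, hence $2^{2p_W+1}=8$.
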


Now that we've formally defined families of bundle graphs generated by a base graph, we come to the characterizations of Banach space properties via non-equi-bi-Lipschitz embeddability of families of graphs, as promised in the introduction.  Work done by Brunel and Sucheston (\cite{brunel_sucheston} and \cite{brunel_sucheston2}, see also Theorem 2.3 in \cite{ostrovskii_randrianantoanina}), shows that for every non-reflexive Banach space $X$, there is a Banach space with an ESA basis which is finitely representable in $X$; and so every family of bundle graphs generated by a finitely-branching bundle graph is equi-bi-Lipschitzly embeddable into any non-reflexive Banach space by Theorem \ref{thm:esa}.  Conversely, a consequence of Lemma 1 in Section 4 of \cite{johnson_schechtman}, says that the family of binary (that is, 2-branching) diamond graphs is not equi-bi-Lipschitzly embeddable into any Banach space with uniformly convex norm.  Virtually the same proof shows that, in fact, every family of bundle graphs generated by a nontrivial bundle graph is not equi-bi-Lipschitzly embeddable into a Banach space with uniformly convex norm.  Every superreflexive Banach space is uniformly convexifiable, so we have the following characterization(s) of superreflexivity.

\begin{theorem}
\label{thm:superreflexivity}
Fix a nontrivial finitely branching bundle graph $G$.  Then a Banach space $X$ is superreflexive if and only if the family of bundle graphs generated by $G$ is non-equi-bi-Lipschitzly embeddable into $X$.
\end{theorem}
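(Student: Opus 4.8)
The plan is to assemble the two implications from the ingredients already collected in the paragraph preceding the statement: for the ``only if'' direction I would combine Enflo's renorming theorem with the Johnson--Schechtman non-embeddability result, and for the ``if'' direction I would combine the Brunel--Sucheston finite-representability result with Corollary \ref{cor:distortion}. Neither implication requires a new construction; the work is in transferring the imported facts along renormings and finite representability, and in tracking that the distortion bound stays uniform over the whole generated family.

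For the forward implication, suppose $X$ is superreflexive. By Enflo's theorem $X$ admits an equivalent uniformly convex norm; denote the resulting space $X_0$. Since the identity map between $X$ and $X_0$ is bi-Lipschitz, a family of (finite) metric spaces is equi-bi-Lipschitzly embeddable into $X$ if and only if it is equi-bi-Lipschitzly embeddable into $X_0$. I would then invoke the variant of Lemma 1 of \cite{johnson_schechtman} quoted above: because $G$ is nontrivial, the family it generates is not equi-bi-Lipschitzly embeddable into any space with uniformly convex norm, in particular not into $X_0$, and hence not into $X$.

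For the converse I would argue by contrapositive, assuming $X$ is not superreflexive and producing a uniform embedding of the entire family. By the definition of superreflexivity there is a non-reflexive Banach space $Y$ that is finitely representable in $X$. Applying the Brunel--Sucheston result to $Y$ yields a Banach space $Z$ with an ESA basis finitely representable in $Y$; since finite representability is transitive, $Z$ is finitely representable in $X$. Corollary \ref{cor:distortion} then supplies a single constant $C$, independent of the target space and of the branching number, such that every $G^{\oslash^k}$ embeds into $Z$ with distortion at most $C$. Each $G^{\oslash^k}$ is finite, so the image of such an embedding lies in a finite-dimensional subspace of $Z$, and finite representability transfers the embedding into $X$ at the cost of a factor $(1+\varepsilon)$. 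Letting $\varepsilon\to 0$ gives $\sup_k c_X\!\left(G^{\oslash^k}\right)\le C<\infty$, i.e. equi-bi-Lipschitz embeddability into $X$.

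Both transfer steps, namely renorming-invariance of equi-embeddability and finite-dimensional transfer through finite representability, are routine. The one point deserving genuine care is the uniformity of the distortion constant across the whole generated family, which is not a formal consequence of Theorem \ref{thm:esa} in isolation: it is exactly what Lemma \ref{cor:pw} provides, since that lemma forces the parameter $p_W$ governing the distortion in Theorem \ref{thm:esa} not to increase under iterated $\oslash$-products, so that Corollary \ref{cor:distortion} applies with a single $C$ for all $G^{\oslash^k}$. I expect this uniformity, rather than either implication taken alone, to be the crux, since the non-embeddability (Johnson--Schechtman) and the embeddability (Brunel--Sucheston together with the ESA construction of Section \ref{sec:superreflexivity}) are otherwise imported wholesale.
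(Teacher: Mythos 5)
Your proposal is correct and follows essentially the same route as the paper: the forward direction via uniform convexifiability of superreflexive spaces together with the Johnson--Schechtman non-embeddability argument, and the converse via Brunel--Sucheston, the ESA embedding, and finite representability, with the uniformity over the family supplied by Lemma \ref{cor:pw} through Corollary \ref{cor:distortion}. Your explicit tracking of the transfer through a non-reflexive space finitely representable in $X$ is slightly more detailed than the paper's one-paragraph sketch, but it is the same argument.
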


\begin{remark}
Johnson and Schechtman \cite{johnson_schechtman} obtained a distortion bound of $16+\varepsilon$ for the equi-bi-Lipschitz embeddability of the family of binary diamond graphs into a non-superreflexive Banach space.  Ostrovskii and Randrianantoanina \cite{ostrovskii_randrianantoanina} improved and generalized this, obtaining a distortion bound of $8+\varepsilon$ for any family of finitely-branching diamond or Laakso graphs.  Corollary \ref{cor:distortion} yields a further generalization, and recovers the same distortion bound of $8+\epsilon$ for any family of finitely-branching diamond, Laakso, or parasol graph.
\end{remark}

Theorem 3.2 in \cite{baudier_etal} shows that within the class of reflexive Banach spaces with an unconditional structure, a Banach space which is not asymptotically uniformly convexifiable will contain $(1+\varepsilon, 1+\varepsilon)$-good $\ell_\infty$ trees of arbitrary height for all $\varepsilon>0$.  Theorem 4.1 in \cite{baudier_etal} then shows that every family of bundle graphs generated by a nontrivial infinitely-branching bundle graph is not equi-bi-Lipschitzly embeddable into any Banach space which is asymptotically midpoint uniformly convexifiable.  Thus we have the following metric characterization(s) of asymptotic uniform convexifiability within the class of reflexive Banach spaces with an unconditional asymptotic structure.

\begin{theorem}
\label{thm:auc_characterization}
Fix a nontrivial $\aleph_0$-branching bundle graph $G$.  Then a reflexive Banach space $X$ with an unconditional asymptotic structure is asymptotically uniformly convexifiable if and only if the family of bundle graphs generated by $G$ is non-equi-bi-Lipschitzly embeddable into $X$.
\end{theorem}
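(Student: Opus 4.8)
The plan is to establish both implications by assembling the embedding result of Section \ref{sec:auc} with the non-embeddability results imported from \cite{baudier_etal}. The key structural observation that makes everything fit together is that the family generated by $G$ is $\{G^{\oslash^k}\}_{k=1}^\infty$ (Definition \ref{def:family}), and by Proposition \ref{oslash} each $\oslash$-product of two $\aleph_0$-branching bundle graphs is again an $\aleph_0$-branching bundle graph. Hence every member $G^{\oslash^k}$ of the family is a countably-branching bundle graph, which is precisely the class of graphs handled by Corollary \ref{cor:auc}.

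For the reverse implication, I would argue by contraposition. Suppose $X$ is a reflexive Banach space with an unconditional asymptotic structure that is \emph{not} asymptotically uniformly convexifiable. Then Corollary \ref{cor:auc} applies directly: for each fixed $\varepsilon>0$, every countably-branching bundle graph---and in particular every $G^{\oslash^k}$---embeds bi-Lipschitzly into $X$ with distortion at most $6+\varepsilon$. Since this bound does not depend on $k$, we have $\sup_k c_X\!\left(G^{\oslash^k}\right)\leq 6+\varepsilon<\infty$, so the family is equi-bi-Lipschitzly embeddable into $X$. This contradicts non-equi-embeddability, establishing that non-equi-embeddability forces $X$ to be asymptotically uniformly convexifiable.

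For the forward implication, I would use the standard fact that any asymptotically uniformly convexifiable space is also asymptotically midpoint uniformly convexifiable, since an asymptotically uniformly convex renorming is in particular asymptotically midpoint uniformly convex. Thus if $X$ is asymptotically uniformly convexifiable it is asymptotically midpoint uniformly convexifiable, and since $G$ is a nontrivial $\aleph_0$-branching bundle graph, Theorem 4.1 of \cite{baudier_etal} asserts that the family generated by $G$ is not equi-bi-Lipschitzly embeddable into any asymptotically midpoint uniformly convexifiable space, hence not into $X$.

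I do not expect a serious obstacle in either logical step, since both reduce quickly to results that are either proved in this paper (Corollary \ref{cor:auc}) or cited from \cite{baudier_etal}; the genuine difficulty lives inside those results. The only points requiring care are bookkeeping in nature: one must confirm that the notion of nontriviality used in Theorem 4.1 of \cite{baudier_etal} (a base graph possessing a vertex of nonzero depth) coincides with the one fixed here, and one must note that $\max W''$ for $G^{\oslash^k}$ grows with $k$, so the embedding in Corollary \ref{cor:auc} genuinely requires good $\ell_\infty$-trees of \emph{arbitrary} height---which is exactly what the hypothesis ``not asymptotically uniformly convexifiable'' supplies, uniformly over the whole family.
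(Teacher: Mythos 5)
Your proposal is correct and follows essentially the same route as the paper: the paper likewise combines Corollary \ref{cor:auc} (via Theorem 3.2 of \cite{baudier_etal}, which supplies good $\ell_\infty$-trees of arbitrary height in the non-convexifiable case) for one direction with Theorem 4.1 of \cite{baudier_etal} for the other. Your added remarks---that each $G^{\oslash^k}$ is again countably branching by Proposition \ref{oslash} and that the distortion bound $6+\varepsilon$ is uniform in $k$---are exactly the bookkeeping the paper leaves implicit.
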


\textbf{Acknowledgments:} The author was partly supported by NSF grants DMS-1464713 and DMS-1565826.  He would like to thank his advisors Thomas Schlumprecht and Florent Baudier for their endless support and Bill Johnson for being a constant and invaluable resource.

\end{document}